\def\namedlabel#1#2{\begingroup
 #2%
 \def\@currentlabel{#2}%
 \phantomsection\label{#1}\endgroup
}
\newtheorem{theorem}{Theorem}[section]
\newtheorem{proposition}[theorem]{Proposition}
\newtheorem{remark}[theorem]{Remark}
\numberwithin{equation}{section}
\newcommand{\Z}{{\mathbb Z}}
\newcommand{\R}{{\mathbb R}}
\newcommand{\K}{{\mathbb C}}
\newcommand{\eps}{{\varepsilon}}
\def\cal#1{\mathcal{#1}}
\def\K{\cal K}
\def\Tc{\mathcal{T}}
\def\Tca#1{{\Tc^{\eps}_{#1}}}
\def\curl{{\rm curl }\,}
\def\div{{\rm div }\,}
\def\loc{\, {\rm loc}}
\def\supp{{{\rm supp}\;}}
\title[Porous medium with irregular holes]{A note on the regularity of the holes for permeability property through a perforated domain for the 2D Euler equations\\
{\it \tiny Dedicated to Professor Jean-Yves Chemin on the Occasion of his {\rm 60}th Birthday}}
\author[C. Lacave \& C. Wang]{Christophe Lacave \& Chao Wang}
\address[C. Lacave]{Univ. Grenoble Alpes, CNRS, Institut Fourier, F-38000 Grenoble, France.}
\email{Christophe.Lacave@univ-grenoble-alpes.fr}
\address[C. Wang]{School of Mathematical Sciences, Peking University\\
 Beijing 100871, P. R. China.}
\email{wangchao@math.pku.edu.cn}
\date{\today}
\begin{document}
\maketitle

\begin{abstract}
For equations of order two with the Dirichlet boundary condition, as the Laplace problem, the Stokes and the Navier-Stokes systems, perforated domains were only studied when the distance between the holes $d_{\varepsilon}$ is equal or much larger than the size of the holes $\varepsilon$. Such a diluted porous medium is interesting because it contains some cases where we have a non-negligible effect on the solution when $(\varepsilon,d_{\varepsilon})\to (0,0)$. Smaller distance was avoided for mathematical reasons and for theses large distances, the geometry of the holes does not affect -or few- the asymptotic result. Very recently, it was shown for the 2D-Euler equations that a porous medium is non-negligible only for inter-holes distance much smaller than the size of the holes. For this result, the regularity of holes boundary plays a crucial role, and the permeability criterium depends on the geometry of the lateral boundary. In this paper, we relax slightly the regularity condition, allowing a corner, and we note that a line of irregular obstacles cannot slow down a perfect fluid in any regime such that $\varepsilon \ln d_{\varepsilon} \to 0$.
\end{abstract}


\section{Introduction}
In this article, we consider the behavior of the 2-D Euler equations in a porous medium. The velocity $u^\eps=(u_1^\eps, u_2^\eps)$ of an ideal incompressible fluid filling a domain $\Omega^\eps$ is governed by the Euler equations:
\begin{equation}\label{Euler}
\left\{
\begin{split}
&\partial_t u^\eps+u^\eps \cdot \nabla u^\eps +\nabla p^\eps=0,\quad (t,x)\in (0,\infty)\times \Omega^\eps;\\
&\div u^\eps=0,\quad (t,x)\in [0,\infty)\times \Omega^\eps;\\
& u^\eps\cdot n=0,\quad (t,x)\in [0,\infty)\times \partial\Omega^\eps;\\
& u^\eps(0,\cdot)=u^\eps_0,\quad x\in \Omega^\eps,
\end{split}
\right.
\end{equation}
where $p^\eps$ is the pressure and $\Omega^\eps$ is an exterior domain that is defined later.

Since these equations were established by Euler in 1755, the study of well-posedness and stability was a constant concern (see e.g. the references given in the introduction of \cite{GV-L}). One of the main reasons of so large literature is that the vorticity, defined by $\omega^\eps:=\curl u^\varepsilon=\partial_1u^\eps_2-\partial_2 u^\eps_1$, satisfies a transport equation
\begin{equation}\label{transport}
\partial_t \omega^\eps+ u^\eps\cdot \nabla \omega^\eps=0,\quad (t,x) \in (0,\infty)\times\Omega^\eps,
\end{equation}
where the velocity should be reconstructed from $\omega^\varepsilon$. This special structure allows to place the 2D-Euler equations in the intersection of many mathematical areas: non-linear PDE's, transport equation with flow maps, elliptic problem with Green kernel and conformal mapping (i.e. using the tools from complex analysis), geodesic flows on a Riemannian manifold, convex integration...

Here, we are interested by the influence of a porous medium on the behavior of the perfect fluid. The porous medium is modelized by $N_{\varepsilon}$ impermeable obstacles (also called inclusions or holes), of size $\varepsilon$ and separated by a distance $d_{\varepsilon}$. For practical interest, it is important to understand the leading behavior when $\varepsilon$ and $d_{\varepsilon}$ are very small compared to the experiment scale.

Such a question is a standard issue in the homogenization problems. For the Laplace equation, it is easy to show that the perforated domain has no effect at the limit in the regime $d_{\varepsilon} \sqrt{|\ln \varepsilon|}\to \infty$ if the holes are uniformly distributed on a surface and in the regime $d_{\varepsilon} |\ln \varepsilon|\to \infty$ if the holes are distributed on a curve. If the above quantities tend to $C>0$ instead to $\infty$, it was proved that we get an homogenized system for the Laplace, Stokes and Navier-Stokes steady flows \cite{Allaire, Allaire2, Mikelic, SP, Tartar}. Therein, the authors then considered very diluted porous medium-$d_{\varepsilon}\gg \varepsilon^\beta$ for any $\beta \in (0,1]$-and it is natural that the criterium does not depend on the geometry of the holes. For inviscid fluids, \cite{Lions-M, MP} obtain an homogenized limit for the weakly nonlinear Euler flow through a periodic grid, i.e. in the regime $d_{\varepsilon}=\varepsilon$ when the holes are distributed on a surface. The Euler equations were treated in \cite{BLM, LM}, where the cases of inter-holes distances smaller than the hole sizes were finally achieved. Here, the geometry of the lateral boundaries of the holes plays a role in the criterium, so we define precisely the domain properties.

The shape of the inclusions $\K$ was assumed to be a simply-connected compact subset of $[-1,1]^2$ such that $\partial K\in C^{1,\alpha}$ for $\alpha >0$ is a Jordan curve. All the inclusions considered have the same shape:
\begin{equation}\label{domain1}
\K_{i,j}^{\eps}:= z_{i,j}^{\varepsilon} + \tfrac\eps 2 \K,
\end{equation}
where the points $z_{i,j}^{\varepsilon}\in \R^2$ are uniformly distributed such that the inclusions of size $\varepsilon$ are at least separated by a distance $d_\varepsilon$, i.e. for $i,j\in \Z$ and $\varepsilon>0$, we set
\begin{equation}\label{domain2}
z_{i,j}^{\varepsilon}:=(\tfrac\eps 2+(i-1)(\eps+d_{\varepsilon}),(j-1)(\eps+d_\eps))=(\tfrac\eps 2,0)+(\eps+d_{\eps})(i-1,j-1).
\end{equation}
In the horizontal direction, we consider the maximal number of inclusions that we can distribute on the unit segment $[0,1]$, hence we consider
$$i=1,\dots, N_{\varepsilon} \text{ in \eqref{domain1}-\eqref{domain2}},$$
with
$$N_{\varepsilon}=\left[\frac{1+d_\eps}{\eps+d_\eps}\right]$$
(where $[x]$ denotes the integer part of $x$). In the vertical direction, we consider two situations:
\begin{itemize}
 \item inclusions covering the unit square, namely
 $$j=1,\dots, N_{\varepsilon} \text{ in \eqref{domain1}-\eqref{domain2}};$$
 \item inclusions concentrated on the unit segment, namely
 $$j=1 \text{ in \eqref{domain1}-\eqref{domain2}}.$$
\end{itemize}

When the obstacles are distributed only in one direction, we need to describe the geometry of the lateral boundaries around the points where the distances between two holes are reached. For simplicity, let us assume that $(\pm 1,0)\in \partial \K$ and that the boundary $\partial\K$ is locally parametrized around $(1,0)$ by
\[
x(s) = 1- \rho |s|^{1+\gamma}, \quad y(s)=s,\quad s\in [-\delta,\delta]
\]
with $\rho ,\gamma>0$, where $\gamma$ is called the tangency exponent, and $\delta>0$ can be assumed small.
For instance, $\gamma =1$ corresponds formally to the case where $\K$ is the unit ball\footnote{or any regular compact set whose the curvature is non zero and finite, see (H2) in \cite{LM} for the extension of the tangency exponent to any boundary.}, whereas $\gamma=\infty$ corresponds to the case where the solid is flat near $(\pm 1,0)$: $[( 1,-\rho_{0}),( 1,\rho_{0})]\subset \partial \K$.

In the case of holes distributed in one direction, the main results of \cite{BLM, LM} are as follows: if $\frac{d_{\varepsilon}}{\varepsilon^{2+\frac1\gamma}}\to \infty$ then the limit motion is not perturbed by the porous medium, whereas, if $\frac{d_{\varepsilon}}{\varepsilon^{2+\frac1\gamma}}\to 0$ then the unit segment becomes impermeable at the limit. We note here that the regimes considered correspond to close inclusions $d_{\varepsilon}\ll \varepsilon$, and it is physically natural that a fluid passes easier between disks than between flat solids.

 In the case of holes distributed in the two directions, the asymptotic behavior depends on the limit of $\frac{d_{\varepsilon}}{\varepsilon}$: if this limit is $\infty$ the presence of the porous medium is not felt at the limit, whereas the unit square becomes impermeable if the limit is zero. Even if the criterium is independent of $\gamma$, it was important in the analysis to have the existence of such a $\gamma>0$, as a consequence of the $C^{1,\alpha}$ regularity assumption.

In both results, it was crucial that $\gamma>0$, and the case of a corner-which corresponds to $\gamma=0$-was one of the open problems listed in \cite{LM}. More precisely, with a corner, the following questions are unsolved:\label{openquestions}
\begin{enumerate}
 \item if the inclusions are distributed in one direction, are there some regimes such that the porous medium is not felt at the limit ?
 \item if the inclusions are distributed in one direction, are there some regimes when we observe the impermeable segment ?
 \item if the inclusions are distributed in two directions, is it possible to state the impermeability result when $\frac{d_{\varepsilon}}{\varepsilon}\to 0$ ?
\end{enumerate}

As we will explain in the final remark, the questions (2) and (3) are unreachable without changing the full analysis. In this paper, we focus on the first question, where we use the technics developed in \cite{LMW} by the authors, in particular the precise behavior of the conformal mapping in the neighborhood of corners. Formally, taking $\gamma=0$ in the criterium $d_{\varepsilon}\gg \varepsilon^{2+\frac1\gamma}$, we would like to prove that the fluid is not perturbed by the porous medium if $d_{\varepsilon}=\varepsilon^\beta$ for any $\beta>0$ arbitrary large.

More precisely, we assume in this article that
\begin{description}
\item[\namedlabel{H1}{\rm\bf(H1)}] $\partial \K$ is a Jordan curve of class $C^{1,\alpha}$ for $\alpha >0$ except in a finite number of points $\{ x_{k}\}_{k=1,\dots,N}$ where $\partial \Omega$ is a corner of angle $\theta_{k}$,
\end{description}
 which reads as
\[ \lim_{s\to 0,s>0} {\rm Angle}(-\Gamma'(s_k-s),\Gamma'(s_k+s))=\theta_{k}\in [0,2\pi]\]
where $\Gamma$ is a parametrization of $\partial \Omega$ (couterclockwise direction) and the points $\{ x_{k}\}_{k=1,\dots,N}$ are of parameter $\{ s_{k}\}_{k=1,\dots,N}$. With this definition, $\theta_{k}$ corresponds to the angle in the fluid, which means that $\theta_{k} =3\pi/2$ if $\K$ is a square. Moreover, we assume that a corner is exactly located at the point where the distance is reached between $\K_{i,1}^\varepsilon$ and $\K_{i+1,1}^\varepsilon$, for instance, let us assume that
\begin{description}
\item[\namedlabel{H2}{\rm\bf(H2)}] $x_{1}=(1,0)$ and there exists $\rho>0$ such that the set $\K\cap(\R^+\times \{s\}) \subset [0,(1-\rho |s|)]\times\{s\}$ for all $s\in [-1,1]$.
\end{description}
Here, we have assumed that there is a corner at the point $(1,0)\in \partial\K$, with an angle $\theta\geq 2(\pi-\arctan\rho^{-1})>\pi$. To avoid painful arguments for a non-interesting case, let us assume that all the angles $\theta_{k}$ are greater than $\pi$, which means that $\K$ is assumed to be locally convex near the corners. For a technical reason that will be explained in due course, we also avoid the cusps and we finally assume
\begin{description}
\item[\namedlabel{H3}{\rm\bf(H3)}]
$\theta_{k}\in (\pi,2\pi)$ for all $k=1,\dots,N$.
\end{description}
 Hence, the domain considered in this paper is the exterior of holes distributed on the unit segment:
\[
\Omega^\eps =\R^2 \setminus \bigcup_{i=1}^{N_\eps}\K_{i}^\eps,
\]
with $\K_{i}^\varepsilon:=\K_{i,1}^\varepsilon$ and $z_{i}^\varepsilon:=z_{i,1}^\varepsilon$ defined in \eqref{domain1}-\eqref{domain2}.

Concerning the initial data, we consider as usual an initial vorticity independent of $\varepsilon$, compactly supported $\omega_0 \in L^\infty_{c}(\R^2)$ and we define the unique continuous initial velocity $u_0^\eps$ associated to $\omega_{0}$ through the following div-curl problem:
\begin{equation*}
\begin{split}
&\div u_0^\eps=0 \quad \textrm{in}\quad \Omega^\eps,\quad \curl u_0^\eps=\omega_0\quad \textrm{in}\quad \Omega^\eps,\quad u_0^\eps\cdot n=0\quad \textrm{on}\quad \partial\Omega^\eps,\\
& \lim_{|x|\to \infty}u_0^\eps=0,\quad \int_{\partial \K^\eps_i} u_0^\eps\cdot \tau ds=0,\quad \textrm{for all}\quad i.
\end{split}
\end{equation*}
The last condition means that the initial circulations around the holes are zero.

For fixed $\eps$, \cite{GV-L} establishes the existence of a global weak solution $(u^\varepsilon, \omega^\varepsilon)$ to the Euler equations in $\Omega^\varepsilon$ such that
\begin{description}
\item[\namedlabel{P}{\rm\bf(P)}] $\| \omega^\varepsilon\|_{L^\infty(\R^+;L^1\cap L^\infty(\Omega^\varepsilon))}\leq \| \omega_{0}\|_{L^1\cap L^\infty(\Omega^\varepsilon)}$ and the circulations around the holes remain zero.
\end{description}
We denote by $(u, \omega)$ the unique global weak solution to the Euler equations in the whole space with the initial data $(u_0, \omega_0)$, where $u_{0}$ is the solution of
\begin{equation*}
\div u_0=0 \quad \textrm{in}\quad \R^2,\quad \curl u_0=\omega_0\quad \textrm{in}\quad \R^2, \lim_{|x|\to \infty}u_0=0.
\end{equation*}

Now, we are in the position to state our main result:
\begin{theorem}\label{main thm}
Assume $\K$ verifies \ref{H1}-\ref{H3}. Let $\omega_0\in L^\infty_{c}(\R^2)$ and $(u^\eps, \omega^\eps)$ be a global weak solution (verifying \ref{P}) to the Euler equations \eqref{Euler} on $\Omega^\eps$ with initial vorticity $\omega_0|_{\Omega^\eps}$ and initial circulations 0 around the inclusions. If $|\eps \ln d_\eps|\to 0$ as $(\varepsilon,d_{\varepsilon})\to (0,0)$, then $u^\eps \to u$ strongly in $L^2_{\rm loc}(\R^+\times \R^2)$ and $\omega^\eps \rightharpoonup \omega$ weak $*$ in $L^\infty(\R^+\times \R^2)$.
\end{theorem}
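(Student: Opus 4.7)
The plan is to adapt the framework developed in \cite{BLM, LM, LMW} for porous 2D Euler, following the three-step pattern: uniform a priori estimates and extraction of a limit; identification of the limit as the full-plane solution by ruling out any singular contribution on the segment $\Sigma:=[0,1]\times\{0\}$; and upgrading weak to strong convergence.

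First I would establish uniform-in-$\varepsilon$ estimates. From \ref{P} the vorticity is controlled in $L^\infty(\R^+;L^1\cap L^\infty)$, and combined with the zero-circulation condition around each $\K_i^\eps$, the Biot--Savart formula on $\Omega^\eps$ gives a uniform bound on $u^\eps$ in $L^\infty(\R^+;L^2_\loc(\R^2))$ after the standard extension by zero inside the inclusions. This furnishes a weak-$*$ limit $\tilde\omega$ for $\omega^\eps$ in $L^\infty$ and a weak $L^2_\loc$ limit $\tilde u$ for $u^\eps$. By Yudovich uniqueness in $\R^2$, to conclude it suffices to show that $\div \tilde u=0$ and $\curl \tilde u=\tilde\omega$ on all of $\R^2$ (i.e.\ no singular part on $\Sigma$), and that $\tilde\omega$ is transported by $\tilde u$.

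The heart of the matter is a permeability estimate: control $\|u^\eps\|_{L^2}$ on a thin strip around $\Sigma$ so that no mass concentrates on the segment. Here I would use the cell-by-cell conformal straightening of \cite{LMW}: for each inclusion $\K_i^\eps$ one constructs a local biholomorphism onto a reference exterior domain, with explicit derivative bounds up to the corner. Hypothesis \ref{H2} places a corner exactly at the point of minimal inter-hole distance, and \ref{H3} ($\theta_k\in(\pi,2\pi)$) ensures the map behaves like $z\mapsto z^{\pi/\theta_k}$ there, with $\pi/\theta_k\in(1/2,1)$, so the corner actually \emph{opens up} the bottleneck rather than pinches it shut. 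For the $C^{1,\alpha}$ tangent case with tangency exponent $\gamma$, the corresponding map compresses the bottleneck to width $\sim d_\eps^{1+1/\gamma}\eps^{-1/\gamma}$, producing the threshold $d_\eps\gg\eps^{2+1/\gamma}$ of \cite{BLM,LM}. By contrast, at a convex corner the compression is only polynomial in $d_\eps$, and the computation of the conformal modulus of the cell yields a logarithmic penalty. Combined with the $L^\infty$ bound on $\omega^\eps$ transported along the flow, this should produce an energy estimate of the form $\|u^\eps\|_{L^2(\text{strip around }\Sigma)}^2\lesssim \eps|\ln d_\eps|$, which vanishes by hypothesis and rules out concentration of $|u^\eps|^2$ on $\Sigma$.

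Once this is in hand, the remaining arguments are standard: a div--curl/compensated compactness argument passes the nonlinearity $u^\eps\otimes u^\eps$ to the limit across $\Sigma$, yielding the transport equation for $\tilde\omega$ in the whole plane; Yudovich uniqueness then identifies $(\tilde u,\tilde\omega)=(u,\omega)$; and the uniform vorticity bound upgrades weak to strong $L^2_\loc$ convergence. The main obstacle is clearly the third paragraph: extracting from the conformal geometry at a corner a sharp estimate in which the only small quantity is $\eps|\ln d_\eps|$. The convexity hypothesis ($\theta_k>\pi$) is needed so that the conformal modulus between the outgoing corners of neighbouring cells stays bounded, and the exclusion of cusps ($\theta_k<2\pi$) is needed to keep the penalty logarithmic rather than polynomial in $d_\eps$, which is precisely why the criterion reads $\eps|\ln d_\eps|\to 0$.
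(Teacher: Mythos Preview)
Your overall architecture (uniform bounds, extraction, identification via Yudovich uniqueness) is fine, and your intuition that the convex corner ``opens up the bottleneck'' is correct. But the mechanism you propose for the permeability step is not the one that actually works, and as stated it has a gap.

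You frame permeability as an estimate $\|u^\eps\|_{L^2(\text{strip around }\Sigma)}^2\lesssim \eps|\ln d_\eps|$, to be obtained from the conformal modulus of a cell, followed by compensated compactness for $u^\eps\otimes u^\eps$. The first claim is not the right target: $u^\eps$ is $O(1)$ pointwise near the segment (it is close to $K_{\R^2}[\omega^\eps]$, which is bounded), so on a strip of width $\delta$ its $L^2$ norm is of order $\delta^{1/2}$, with no useful smallness beyond the measure of the strip. What must be shown is that the \emph{correction} $u^\eps-K_{\R^2}[\omega^\eps]$ is small in $L^2(\Omega^\eps)$ globally, not that $u^\eps$ itself is small on a strip. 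The paper does this by constructing, for each $f\in L^\infty_c$, an explicit divergence-free, tangent vector field $v^\eps[f]$ equal to the exterior-of-one-hole Biot--Savart law near each $\K_i^\eps$ and to $K_{\R^2}[f]$ far away, patched by cutoffs $\varphi_i^\eps$. The crucial observation is that $u^\eps-v^\eps[\omega^\eps]$ is the Leray projection of $K_{\R^2}[\omega^\eps]-v^\eps[\omega^\eps]$ (same curl, same circulations, same decay, one tangent and one not), so by $L^2$-orthogonality $\|u^\eps-K_{\R^2}[\omega^\eps]\|_{L^2}\leq 2\|K_{\R^2}[\omega^\eps]-v^\eps[\omega^\eps]\|_{L^2}$. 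This reduces everything to estimating the explicit quantity $K_{\R^2}[f]-v^\eps[f]$, and no compensated compactness is needed: once $u^\eps-K_{\R^2}[\omega^\eps]\to 0$ strongly in $L^2$, the nonlinearity $u^\eps\omega^\eps$ passes to the limit by a direct strong--weak argument in the vorticity formulation.

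The logarithm then arises not from a conformal modulus computation but from the choice of cutoff. Writing $K_{\R^2}[f]-v^\eps[f]$ in terms of $\nabla\varphi_i^\eps$ and $\varphi_i^\eps$ times quantities controlled via the conformal map $\Tc$ (this is where the corner behaviour $|D\Tc(x)|\sim|x-x_k|^{\pi/\theta_k-1}$ from \cite{LMW} enters, and where $\theta_k<2\pi$ is used so that $D\Tc\in L^4_{\loc}$), the estimate reduces to $\eps^{1/2}\|\nabla\varphi^\eps\|_{L^2}$. Because of \ref{H2}, the gap between neighbouring holes at height $x_2$ has width $\sim d_\eps+\rho|x_2|$ (linear, not $|x_2|^{1+\gamma}$), so the optimal cutoff satisfies $\|\nabla\varphi^\eps\|_{L^2}^2\lesssim\int_0^\eps(d_\eps+\rho x_2)^{-1}\,dx_2\sim|\ln d_\eps|$. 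This is the precise origin of the criterion $\eps|\ln d_\eps|\to 0$; your attribution of the no-cusp hypothesis to ``keeping the penalty logarithmic'' is not the actual reason.
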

Here, we have extended $(u^\varepsilon,\omega^\varepsilon)$ by zero inside the holes. For any $\beta>0$, we note that the case $d_{\varepsilon}=\varepsilon^\beta$ satisfies $|\eps \ln d_\eps|\to 0$. Even if this result is a natural extension of the previous works \cite{BLM,LM}, the proof is not obvious. Indeed, it was listed in \cite{LM} as an open question because the conformal mapping is not regular if the boundary admits a corner. The behavior of solutions of elliptic problems with the respect to the boundary regularity is an important research focus in analysis of PDE's-as testified by the huge literature (see e.g. \cite{Kenig} for results when $\partial\Omega$ is Lipschitz, and \cite{Kondra,Grisvard,Mazya} in domains with corners)- and also in complex analysis (see \cite{Pomm}). The authors of this article have already used such a theory to prove in \cite{LMW} the uniqueness of the Euler solutions if the domain has some corners whose angles are less or equal than $\pi/2$. The main idea here is to use this analysis to adapt a key proposition of \cite{LM} which will give automatically Theorem~\ref{main thm}.

Unfortunately, the permeability result when $|\eps \ln d_\eps |\to \infty$ and the case of a holes distributions in both directions cannot be achieved with this argument (see the final remark for technical details).

The rest of this article is divided in three sections. In the next part, we give the proposition concerning the behavior of the conformal mapping which will be the key to extend the analysis performed in \cite{BLM,LM}. We also state the new estimate for the cell problem and briefly recall how it implies Theorem~\ref{main thm}. Section~\ref{sec:prop} is dedicated to the proof of this new estimate. In the last section, we adapt an argument performed in \cite{ADL} and recently revisited in \cite{HLW}, to state a new stability estimate, with a precise rate, before $\omega$ reaches the segment. Namely, we will prove the following theorem.

\begin{theorem}\label{main thm2}
Assume $\K$ verifies \ref{H1}-\ref{H3}. Let $\omega_0\in C^1_{c}(\R^2\setminus ([0,1]\times\{0\}))$ and $(u^\eps, \omega^\eps)$ be a global weak solution (verifying \ref{P}) to the Euler equations \eqref{Euler} on $\Omega^\eps$ with initial vorticity $\omega_0|_{\Omega^\eps}$ and initial circulations 0 around the inclusions. Let $T>0$ and $K_{T}$ be a compact subset of $\R^2\setminus ([0,1]\times\{0\})$ such that $\supp \omega(t,\cdot) \subset K_{T}$ for all $t\in [0,T]$. If $|\eps \ln d_\eps|\to 0$, then there exist $\varepsilon_{T}>0$ and $C_{T}>0$ (depending only on $T$, $K_{T}$ and $\omega_{0}$) such that 
\[
\| \omega^{\varepsilon} - \omega \|_{L^\infty([0,T]\times \R^2)} \leq C_{T} \Big(d_\eps + \eps |\ln d_\eps |\Big)^{\frac12}, \quad \forall \varepsilon \leq \varepsilon_{T}.
\]
Moreover, for any $K$ compact subset of $\R^2\setminus([0,1]\times [-\varepsilon_{T},\varepsilon_{T}])$, there exists $C_{T,K}>0$ (depending only on $T$, $K_{T}$, $K$ and $\omega_{0}$) such that
\[
\| u^{\varepsilon}(t,\cdot) - u(t, \cdot) \|_{L^{\infty}(K)} \leq C_{T,K} \Big(d_\eps + \eps |\ln d_\eps |\Big)^{\frac12}, \quad \forall t\in [0,T], \ \forall \varepsilon \leq \varepsilon_{T}.
\]
\end{theorem}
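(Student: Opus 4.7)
The plan is a quantitative Gronwall argument at the Lagrangian level. Write $u^\varepsilon = K_{\R^2}[\omega^\varepsilon]+v^\varepsilon$, where $v^\varepsilon$ is the harmonic corrector enforcing impermeability on $\partial\Omega^\varepsilon$ and zero circulations around the holes, so that $u^\varepsilon-u=v^\varepsilon+K_{\R^2}[\omega^\varepsilon-\omega]$. The main input from Section~\ref{sec:prop} will provide, for $\eta_\varepsilon:=d_\varepsilon+\varepsilon|\ln d_\varepsilon|$ and any compact $K\subset\R^2\setminus([0,1]\times\{0\})$,
\[
\|v^\varepsilon(t,\cdot)\|_{L^\infty(K)}\leq C_K\sqrt{\eta_\varepsilon}.
\]

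First I would upgrade Theorem~\ref{main thm} to a uniform-in-time support localization: since $\omega_0\in C^1_c$ is supported away from $[0,1]\times\{0\}$ and the limit flow $\phi_t$ of $u$ is Lipschitz on $[0,T]$ (by standard persistence of $C^{0,\alpha}$ regularity for 2D Euler in the whole plane), one selects $\varepsilon_T>0$ and a compact $\tilde K_T\Subset \R^2\setminus([0,1]\times[-\varepsilon_T,\varepsilon_T])$ containing a neighborhood of $K_T$, so that $\supp \omega^\varepsilon(t,\cdot)\subset\tilde K_T$ for every $\varepsilon\leq\varepsilon_T$ and every $t\in[0,T]$. Because the obstacles lie in the thin strip $[0,1]\times[-\varepsilon,\varepsilon]$, they are then disjoint from $\tilde K_T$, and the classical bootstrap propagation of $C^{0,\alpha}$ regularity for 2D Euler can be carried out inside $\tilde K_T$: this yields a uniform bound $\|\nabla u^\varepsilon(t,\cdot)\|_{L^\infty(\tilde K_T)}\leq L^*$ and, in particular, a uniform bi-Lipschitz bound on $(\phi^\varepsilon_t)^{-1}$, with constants depending only on $T$, $K_T$ and $\omega_0$.

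With this in hand, set $\delta(t):=\|\phi^\varepsilon_t-\phi_t\|_{L^\infty(\supp \omega_0)}$. The Lagrangian identities $\omega(t,\cdot)=\omega_0\circ\phi_t^{-1}$ and $\omega^\varepsilon(t,\cdot)=\omega_0\circ(\phi^\varepsilon_t)^{-1}$ together with the uniform bi-Lipschitz bound on the backward flow of $u^\varepsilon$ give $\|\omega^\varepsilon(t,\cdot)-\omega(t,\cdot)\|_{L^\infty}\leq C\,\delta(t)$, and since $\supp (\omega^\varepsilon-\omega)(t,\cdot)\subset\tilde K_T$ has bounded measure, also $\|\omega^\varepsilon-\omega\|_{L^1}\leq C\,\delta(t)$. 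The standard interpolation $\|K_{\R^2}*f\|_\infty\leq C\sqrt{\|f\|_\infty\|f\|_1}$ then yields
\[
\|K_{\R^2}[\omega^\varepsilon-\omega](t,\cdot)\|_{L^\infty(\tilde K_T)}\leq C\,\delta(t).
\]
Here the $C^1$ regularity of $\omega_0$ (as opposed to just $L^\infty_c$) is decisive: it is what upgrades the trivial $L^1$-$L^\infty$ interpolation from $\sqrt{\delta(t)}$ to the linear bound $\delta(t)$, and thus avoids the Osgood loop that would destroy the $\sqrt{\eta_\varepsilon}$ rate.

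Finally, comparing the ODEs for $\phi^\varepsilon$ and $\phi$ and using Lipschitz continuity of $u$,
\[
\delta(t)\leq\int_0^t\bigl(\|v^\varepsilon\|_{L^\infty(\tilde K_T)}+\|K_{\R^2}[\omega^\varepsilon-\omega]\|_{L^\infty(\tilde K_T)}\bigr)(s)\,ds+\mathrm{Lip}(u)\int_0^t\delta(s)\,ds\leq C_T\sqrt{\eta_\varepsilon}+C\int_0^t\delta(s)\,ds,
\]
and Gronwall produces $\delta(T)\leq C_T\sqrt{\eta_\varepsilon}$. The vorticity estimate follows from the Lagrangian identity and the backward-flow bound, while the velocity estimate on any compact $K\subset\R^2\setminus([0,1]\times[-\varepsilon_T,\varepsilon_T])$ is immediate from the Biot-Savart decomposition and the two bounds above. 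I expect the main obstacle to be the uniform bi-Lipschitz bound on $(\phi^\varepsilon_t)^{-1}$: propagating $C^{0,\alpha}$ regularity of the vorticity uniformly in $\varepsilon$ in a perforated domain is delicate, and relies crucially on the preliminary support localization step which ensures that the holes never meet the support of $\omega^\varepsilon$.
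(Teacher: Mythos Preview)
Your overall strategy---a Lagrangian Gronwall closing on $\delta(t)=\|\phi^\varepsilon_t-\phi_t\|_{L^\infty(\supp\omega_0)}$, fed by the $L^\infty$ bound on the harmonic corrector and by $\|K_{\R^2}[\omega^\varepsilon-\omega]\|_{L^\infty}\leq C\delta(t)$---is exactly the paper's approach. The ingredients (harmonicity upgrade of the $L^2$ cell estimate, uniform $\nabla u^\varepsilon$ bound via a uniform $C^1$ bound on $\omega^\varepsilon$, trajectory stability, and the resulting vorticity/velocity rates) are all present.

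There is, however, a genuine circularity in your ``preliminary support localization'' step. You propose to \emph{first} upgrade Theorem~\ref{main thm} so that $\supp\omega^\varepsilon(t,\cdot)\subset\tilde K_T$ on $[0,T]$ for all small $\varepsilon$, and only \emph{then} run the Gronwall. But weak-$*$ convergence of $\omega^\varepsilon$ and $L^2_{\loc}$ convergence of $u^\varepsilon$ (the content of Theorem~\ref{main thm}) do not control supports, and every later step of your argument---the $C^{0,\alpha}$ propagation, the uniform $\nabla u^\varepsilon$ bound on $\tilde K_T$, the very definition of $\phi^\varepsilon_t$ on $\supp\omega_0$---already presupposes this localization. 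The paper resolves this by a standard bootstrap: introduce intermediate compacts $K_T\Subset K_1\Subset K_2\Subset\Omega^\varepsilon$, define $T_\varepsilon\in(0,T]$ as the maximal time with $\supp\omega^\varepsilon(t,\cdot)\subset K_1$, run \emph{all} the estimates on $[0,T_\varepsilon]$, and then use the resulting trajectory stability $|X^\varepsilon(t,x)-X(t,x)|\leq C\sqrt{\eta_\varepsilon}$ to rule out $T_\varepsilon<T$ for $\varepsilon$ small. You should reorganize your proof in this order rather than claim the localization in advance.

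Two smaller points. First, Section~\ref{sec:prop} alone does not give $\|v^\varepsilon\|_{L^\infty(K)}\leq C_K\sqrt{\eta_\varepsilon}$: Proposition~\ref{prop:key1} yields an $L^2(\Omega^\varepsilon)$ bound on $K_{\R^2}[\omega^\varepsilon]-v^\varepsilon[\omega^\varepsilon]$ for the \emph{constructed} corrector $v^\varepsilon[\cdot]$; you still need the Leray-projection step (Section~\ref{sec:2.2}, Step~1) to pass to $u^\varepsilon-K_{\R^2}[\omega^\varepsilon]$, and then harmonicity of this difference to upgrade $L^2(K_2)$ to $L^\infty(K_1)$. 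Second, the ``main obstacle'' you anticipate---the uniform bi-Lipschitz bound on the $\varepsilon$-flow---is handled in the paper not by $C^{0,\alpha}$ propagation but by the slightly more direct route of differentiating the vorticity equation along $X^\varepsilon$, bounding $\|\nabla\omega^\varepsilon\|_{L^\infty}$ via the log-Lipschitz Gronwall, and then using $\|\nabla u^\varepsilon\|_{L^\infty(K_1)}\leq C\big(1+\ln(1+\|\nabla\omega^\varepsilon\|_{L^\infty})\big)$ from harmonicity plus the standard Biot--Savart estimate; this is entirely local to $K_1$ once the bootstrap time $T_\varepsilon$ is in place.
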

Let us note that it is obvious that for any $T$, $\omega$ is compactly supported on $[0,T]$. So the main constraint is that this theorem holds true until $\omega$ reaches the segment. Of course, there is many cases where the 2D Euler vorticity never meets the segment, and then our estimates are global in time in these cases. As we notice in the core of the proof, this is related to a stability estimate of the Lagrangian trajectories associated to $u^\varepsilon$ and $u$. Such an estimate is usually obtained by using a $C^1$ (or log-lipschitz) norm of $u^\varepsilon - u$ which sounds very hard to obtain in the vicinity of the porous medium.

We also mention that we can write the last statement in Theorem~\ref{main thm2} as follows: 
\newline \noindent``{\it Moreover, for any $K$ compact subset of $\R^2\setminus ([0,1]\times\{0\})$, there exists $\varepsilon_{T,K},C_{T,K}>0$ (depending only on $T$, $K_{T}$, $K$ and $\omega_{0}$) such that}
\[
\| u^{\varepsilon}(t,\cdot) - u(t, \cdot) \|_{L^{\infty}(K)} \leq C_{T,K} \Big(d_\eps + \eps |\ln d_\eps |\Big)^{\frac12}, \quad \forall t\in [0,T], \ \forall \varepsilon \leq \varepsilon_{T,K}.\text{''}
\]

\section{Conformal mapping and new cell estimate}

 In this section, we bring together several arguments coming from \cite{Lacave,LMW} concerning the behavior of conformal mapping in domains with corners, and from \cite{BLM,LM} concerning the proof of Theorem~\ref{main thm} from a permeability proposition (see later Proposition~\ref{prop:key1}). This proposition is independent of the Euler motion but gives the key estimate of the cell problem for tangent divergence free vector fields.

 \subsection{Conformal mapping}\label{sec-biholo}

 Let $\Tc: \ \K^c \to \R^2\setminus \overline{B(0,1)}$ be the unique biholomorphism such that $\Tc(\infty)=\infty$ and $\Tc'(\infty)\in \R^+_{*}$, which means that there exists a bounded holomorphic function $h$ on $\K^c$ such that
\begin{equation}\label{Tinf}
 \Tc (z) = \beta z + h(z)
\end{equation}
 for some $\beta\in \R^+_{*}$. Up to a vertical translation and considering that the corner is located at $(1,h_{0})$ instead to $(1,0)$, we assume without any loss of generality that a small neighborhood of zero is included in $\K$.

This conformal mapping is of class $C^k$ up to the boundary if $\partial\Omega$ is of class $C^{k,\alpha}$, for $\alpha\in (0,1)$. The boundary does not verify this regularity assumption in domains with corners, and we collect in the following proposition the properties of $\Tc$ that we will use later.

\begin{proposition}\label{thm:conform}
Assume that $\partial\K$ verifies \ref{H1} and \ref{H3}. Let $\delta_0:=\frac 16 \min_{i\neq j}\{|x_i-x_j|, |\Tc(x_i)-\Tc(x_j)|\}$. Then there exists $M\geq 1$ depending on $\K$ such that
\begin{itemize}
\item $\Tc$ and $\Tc^{-1}$ extend continuously up to the boundary ;
\item for all $x\in \K^c \setminus \bigcup_{k=1}^N B(x_k, \delta_0)$ and $y\in \overline{B(0,1)}^c \setminus \bigcup_{k=1}^N B(\Tc(x_k), \delta_0)$, we have
\[
M^{-1} \leq |D\Tc(x)|\leq M, \quad M^{-1} \leq |D\Tc^{-1}(y)|\leq M \, ;
\]
\item for any $k=1,2,\cdots, N$ and all $x\in \K^c\cap B(x_k, \delta_0)$ and $y\in \overline{B(0,1)}^c \cap B(\Tc(x_k), \delta_0)$, we have
\begin{align*}
& M^{-1}|x-x_k|^{\pi/\theta_k-1}\leq |D\Tc(x)| \leq M |x-x_k|^{\pi/\theta_k-1},\\
& M^{-1}|y-\Tc(x_k)|^{\theta_k/\pi-1}\leq |D\Tc^{-1}(y)| \leq M |y-\Tc(x_k)|^{\theta_k/\pi-1}\, ;
\end{align*}
\item for all $x,y\in \K^c$, we have
\begin{equation*}
|\Tc(x)-\Tc(y)|\leq M\max \{ |x-y|^{\mu},\quad |x-y| \},
\end{equation*}
where $\mu=\min_{k}\frac{\pi}{\theta_{k}}\in (\frac12,1)$ ;
\item for all $x,y\in \overline{B(0,1)}^c$, we have
\begin{equation*}
|\Tc^{-1}(x)-\Tc^{-1}(y)|\leq M |x-y| .
\end{equation*}
\end{itemize}
\end{proposition}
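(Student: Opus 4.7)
The plan is to assemble this proposition from classical conformal mapping results. First, since $\partial\K$ is a Jordan curve (the concatenation of finitely many $C^{1,\alpha}$ arcs meeting at the corners $x_k$), the Caratheodory--Osgood theorem applies to $\K^c$ viewed as a Jordan domain on the Riemann sphere, so $\Tc$ and $\Tc^{-1}$ extend as homeomorphisms up to the boundary; this gives the first bullet.

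Away from the corners, the boundary is $C^{1,\alpha}$, so I would invoke the Kellogg--Warschawski theorem (see \cite{Pomm}): on $\K^c\setminus\bigcup_k B(x_k,\delta_0)$, the map $\Tc$ is of class $C^{1,\alpha}$ up to the boundary, and the analogous statement holds for $\Tc^{-1}$ on the corresponding subset of $\overline{B(0,1)}^c$. Because $\Tc'$ is a non-vanishing holomorphic function with a finite nonzero limit at $\infty$ (by \eqref{Tinf}), it is bounded above and below on these compact regions, which gives the second bullet. The choice $\delta_0=\frac16\min_{i\neq j}\{|x_i-x_j|,|\Tc(x_i)-\Tc(x_j)|\}$ ensures the balls $B(x_k,\delta_0)$ and $B(\Tc(x_k),\delta_0)$ are pairwise disjoint and that corners can be treated one at a time.

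For the behavior near a corner, I would use the local expansion at a boundary corner, recalled for instance in \cite{Lacave,LMW} via \cite{Pomm}: after translating so that $x_k=0$ and rotating so that the bisector of the corner points in a fixed direction, one has
\begin{equation*}
\Tc(z) = \Tc(x_k) + c_k\, (z-x_k)^{\pi/\theta_k}\bigl(1 + g_k(z)\bigr),
\end{equation*}
with a nonzero constant $c_k$ and a function $g_k$ that is Hölder continuous up to the boundary with $g_k(x_k)=0$; the branch of the power is chosen so that the corner of opening $\theta_k$ is mapped to a half-tangent sector. Differentiating yields $|D\Tc(z)|\asymp |z-x_k|^{\pi/\theta_k-1}$ on $\K^c\cap B(x_k,\delta_0)$. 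Inverting the expansion (using that $\pi/\theta_k\in(1/2,1)$, so the mapping is a local homeomorphism to a smooth-boundary piece) gives the analogous expansion for $\Tc^{-1}$ with exponent $\theta_k/\pi$, hence $|D\Tc^{-1}(y)|\asymp |y-\Tc(x_k)|^{\theta_k/\pi-1}$. Together these bounds are the third bullet.

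The final two bullets follow by integrating the derivative bounds along straight segments (or short concatenations that stay in $\K^c$, which is possible because near each corner the set is locally convex by the convention following \ref{H2}). For $\Tc$, the exponent $\pi/\theta_k-1\in(-1/2,0)$ is greater than $-1$, so the derivative is locally integrable; integrating produces Hölder continuity with exponent $\mu=\min_k\pi/\theta_k\in(1/2,1)$ inside the corner neighborhoods, while outside them the derivative is bounded by the second bullet and one gains Lipschitz behavior, yielding the stated $\max\{|x-y|^\mu,|x-y|\}$ estimate. For $\Tc^{-1}$, the assumption \ref{H3} is essential: $\theta_k>\pi$ forces $\theta_k/\pi-1>0$, so $|D\Tc^{-1}|$ is in fact \emph{bounded} (and tends to $0$) near each $\Tc(x_k)$, and combined with the second bullet one gets a uniform bound on $|D\Tc^{-1}|$ over all of $\overline{B(0,1)}^c$, hence the global Lipschitz estimate. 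The main technical point--and the place where one must be careful--is this exclusion of cusps ($\theta_k=\pi$ would give exponent $0$, only log behavior; $\theta_k<\pi$ would make $|D\Tc^{-1}|$ blow up and ruin the Lipschitz bound), which is precisely why \ref{H3} is built into the hypotheses.
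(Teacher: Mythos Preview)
Your proposal is correct in outline and close to the paper's approach. For the third bullet you cite the local corner expansion $\Tc(z)=\Tc(x_k)+c_k(z-x_k)^{\pi/\theta_k}(1+g_k(z))$ as a black box, whereas the paper derives the derivative bounds by hand: it composes $\Tc^{-1}$ with the straightening map $\varphi_k(z)=(z-x_k)^{\pi/\theta_k}$ and an auxiliary Riemann map $g_k$ onto a smooth subdomain, then applies Kellogg--Warschawski to the composite $\varphi_k\circ\Tc^{-1}\circ g_k^{-1}$ (which now runs between $C^{1,\alpha}$ domains) and unwinds via the chain rule. The two routes are equivalent; yours is shorter if the expansion with its derivative control is granted, the paper's is more self-contained.

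There is one genuine gap in your treatment of the fourth bullet. The phrase ``straight segments or short concatenations that stay in $\K^c$, which is possible because near each corner the set is locally convex'' does not establish what is needed: for arbitrary $x,y\in\K^c$ the segment $[x,y]$ may cross $\K$, and local convexity at the corners says nothing about a global replacement path of length comparable to $|x-y|$. The paper handles this by observing that $\partial\K$, being a piecewise $C^1$ Jordan curve with no interior cusp (this is where the upper bound $\theta_k<2\pi$ from \ref{H3} enters), is a quasidisk, and then invokes Ahlfors' characterization (quasidisk $\Leftrightarrow$ quasiconvex complement) to conclude that $\K^c$ is $a$-quasiconvex: any $x,y\in\K^c$ are joined by a rectifiable $\gamma\subset\K^c$ with $\ell(\gamma)\le a|x-y|$. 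After that step your integration argument---decomposing $\gamma$ into pieces near and away from the corners---goes through exactly as you describe. Incidentally, the role of \ref{H3} you emphasize at the end, boundedness of $D\Tc^{-1}$, uses only the lower bound $\theta_k>\pi$; the exclusion of cusps $\theta_k<2\pi$ is what makes the quasiconvexity step work.
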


\begin{proof}
Because of $\Tc$ is a Riemann mapping and $\partial \K\in C^{0,\alpha}$, the first bullet point can be directly obtained. 

Here, the main job is to study the behavior of $\Tc$ near corners. We consider the corner at $x_1=(1,0)$. First, we define a straighten mapping: $\varphi_1:=(z-x_1)^{\frac \pi{\theta_1}}$. It is easy to verify that $\varphi_1$ is injective and continuous on $\K^c \cap B(x_1, 2\delta_1)$ with some small data $\delta_1$.

Next, we define $D_1\subseteqq \overline{B(0,1)}^c$ be a $C^\infty$ Jordan domain such that
\[
\K^c \cap B(x_1,\delta_1) \subset \Tc^{-1}(D_1)\subset \K^c \cap B(x_1, 2\delta_1)
\]
and $g_1:D_1\to B(0,1)$ be a Riemann mapping. Define $\Omega_1:=\Tc^{-1}(D_1)$, which is $C^{1,\alpha}$ except at $x_1$, and $\widetilde{\Omega}_1:=\varphi_1(\Omega_1)$, which is $C^{1,\alpha}$ (for more details about localization and straightening, we refer to the proof of \cite[Theorem 3.9]{Pomm}).

Based on the above notations, we define a Riemann mapping $f_1=\varphi_1\circ \Tc^{-1}\circ g_1^{-1}: B(0,1)\to \widetilde{\Omega}_1$. By Kellogg-Warschawski Theorem (see \cite[Theorem 3.6]{Pomm}), we have
\[
C^{-1}_1\leq |f'_1(\zeta)| \leq C_1, \quad \forall \zeta\in \overline{B(0,1)}.
\]
On the other hand, $g_1^{-1}$ is a Riemann mapping which satisfies the same property, hence
\[
\widetilde{C}^{-1}_1\leq |(\varphi_1\circ \Tc^{-1})'(\zeta)| \leq \widetilde{C}_1, \quad \forall \zeta\in \overline{D}_1,
\]
which implies that
\[
\frac{\theta_1}{\pi \widetilde{C}_1} |\Tc^{-1}(y)-x_1|^{-\pi/\theta_1+1} \leq |(\Tc^{-1})'(y)|\leq \frac{\theta_1\widetilde{C}_1} {\pi } |\Tc^{-1}(y)-x_1|^{-\pi/\theta_1+1},\quad \forall y\in D_{1}.
\]
Thus, we get that
\[
\frac{\pi}{\theta_1 \widetilde{C}_1} | x-x_1|^{\pi/\theta_1-1} \leq |\Tc'(x)|\leq \frac{\pi \widetilde{C}_1} {\theta_1} | x-x_1|^{\pi/\theta_1-1},\quad \forall x\in \Omega_1.
\]

For any $x\in \K^c \cap B(x_1,\delta_1)$, we look for a smooth path $\gamma$ in $\K^c \cap B(x_1,\delta_1)$ joining $x_1=(1,0)$ and $x$. Due to the definition of a corner \ref{H1}, we state that there exists an angle $\theta$ such that the segment $[(1-\delta_1, 0),(1,0)]\subset \mathcal{R}(\theta)\K$, where $\mathcal{R}(\theta)$ is the rotation of angle $\theta$ around $x_{1}$ (considering $\delta_{1}$ slightly smaller if necessary). For instance, $\theta=-\lim_{s\to 0,s>0} {\rm Angle}((1,0),\Gamma'(s_k-s))+\frac{\theta_{1}} 2$ holds. Choosing $\delta_{1}$ smaller if necessary, this rotation is needed to state that the regularity of $\partial \K$ away the corner implies that, for any $(a,b)\in \mathcal{R}(\theta)K^c \cap B(x_{1}, \delta_{1})$, the segments $[(a,b),(1,b)]$ and $[(1,b),(1,0))$ belongs to $\mathcal{R}(\theta)\K^c \cap B(x_{1}, \delta_{1})$. Hence we denote by $(a,b)$ the coordinates of $\mathcal{R}(\theta)x$ and we define $\gamma$ on $[0,1]$ as
\begin{equation}\label{gamma}
\tilde \gamma(t)=((a-1) t^B +1 , b t) , \quad \gamma = \mathcal{R}(-\theta)\tilde \gamma,
\end{equation}
where $B\geq 1$ is chosen large enough such that $\tilde \gamma \subset \mathcal{R}(\theta)\K^c \cap B(x_1,\delta_1)$. Of course, if the segment $(x_{1},x]\subset \K^c \cap B(x_1,\delta_1)$, then we choose $B=1$ (which means that $\gamma$ is the segment).
Integrating on the curve $\gamma$, we get
\begin{align*}
|\Tc(x)- \Tc(x_1)|
\leq& \int_{0}^1 |\Tc'(\gamma(t))\gamma'(t)|dt\leq C\int_{0}^1 (| a-1| t^B + |b| t)^{\pi/\theta_1-1}(| a-1|B t^{B-1} + |b| ) dt\\
\leq& C\int_{0}^1 \Big((| a-1| t^B )^{\pi/\theta_1-1} | a-1|B t^{B-1} + ( |b| t)^{\pi/\theta_1-1} |b| \Big)\, dt\\
\leq& C\int_{0}^1\Big( (| a-1| )^{\pi/\theta_1} B t^{B\pi/\theta_1-1} + |b|^{\pi/\theta_1} t^{\pi/\theta_1-1}\Big)\, dt\\
\leq& C\Big(( |a-1|)^{\pi/\theta_1}+ |b|^{\pi/\theta_1} \Big) \leq C |x-x_1|^{\pi/\theta_1},\quad \forall x\in \K^c \cap B(x_1,\delta_1),
\end{align*}
where we have used $\pi/\theta_{1}<1$. In the above estimate, $C$ is independent of $a,b,B$, hence of $x$.

If we choose $D_1$ convex, by considering the segment $[y, \Tc(x_{1})]$ we also obtain that
\[
|\Tc^{-1}(y)-x_1|=|(\varphi_1\circ \Tc^{-1})(y)-(\varphi_1\circ \Tc^{-1})(\Tc(x_1))|^{\theta_1/\pi}\leq C |y-\Tc(x_1)|^{\theta_1/\pi},\quad \forall y\in D_1,
\]
which implies that
\[
|x-x_1|^{\pi/\theta_1}\leq C |\Tc(x)-\Tc(x_1)|,\quad \forall x\in \Omega_1.
\]
The two previous inequalities give the conclusion for the estimates of $(\Tc^{-1})'$ in the neighborhood of $\Tc(x_{1})$.

These inequalities yield the claims in the second bullet point for $k=1$, and similarly for any $k=2,\dots,N$. The claims in the first bullet point are also obtained by Kellogg-Warschawski Theorem, by considering a smooth domain $D_0\subseteq \overline{B(0,1)}^c$ such that 
\[
\K^c\setminus \bigcup_{k=1}^N B(x_{k},\delta_{k}) \subset \mathcal{T}^{-1}(D_{0}) \subset \K^c\setminus \bigcup_{k=1}^N B(x_{k},\delta_{k}/2),
\]
$g_1:D_0\to \overline{B(0,1)}^c$ be a Riemann mapping and $\varphi_0(z):=z$.

\medskip

We now focus on the third bullet point. We now claim that $\K^c$ is {\it a-quasiconvex} for some $a\geq 1$, that is, for any $x,y\in\K^c$ there exists a rectifiable path $\gamma$ joining $x,y$ and satisfying
$\ell(\gamma) \leq a |x-y|$.
This follows from \ref{H1} and \ref{H3} because $\partial K$ is a piecewise $C^1$ Jordan curve with no interior cusp and hence a quasidisc (see, e.g., \cite{Gustafsson}), and Ahlfors shows in \cite{Ahlfors} that in 2D, we have
\[
\partial \K \text{ is a quasidisk} \Longleftrightarrow \K^c \text{ is quasiconvex}.
\]

Hence, for any $x,y\in\K^c$, let us consider such a path $\gamma$. Then we decompose the path as $\gamma=\gamma_{0}\cup_{k} \gamma_{k}$ where $\gamma_{k}=\gamma \cap B(x_{k},\delta_{k})$ and $\gamma_{0}=\gamma \cap ( \R^2\setminus \cup_{k} B(x_{k},\delta_{k}))$. Up to shorten $\gamma$, it is clear that $\gamma$ can intersect $\partial B(x_{k},\delta_{k})$ only twice, or once (which means that $x$ or $y$ belongs to $B(x_{k},\delta_{k}))$ or never (which means that the curve avoids this disk or that $\gamma\subset B(x_{k},\delta_{k})$.

In any of these three cases, we only need to show that for $\bar x,\bar y \in \K^c \cap \overline{B(x_{k},\delta_{k})}$ and $\gamma_{k}$ a path in $\K^c \cap B(x_{k},\delta_{k})$ between these two points, then
\begin{equation}\label{interm}
\Big| \int_{0}^1 \Tc'(\gamma_{k}(t))\gamma_{k}'(t)\,dt \Big| \leq C |\bar x- \bar y |^{\pi/\theta_{k}},
\end{equation}
because it will imply
\begin{align*}
 |\Tc(x)- \Tc(y)|
\leq& \Big| \int_{0}^1 \Tc'(\gamma(t))\gamma'(t)dt \Big| \leq M \ell (\gamma_{0}) + \sum _{k }C \ell (\gamma_{k})^{\pi/\theta_{k}} \leq M \ell (\gamma) + C \sum _{k }\ell (\gamma)^{\pi/\theta_{k}}\\
\leq & C \max \{ |x-y|^{\mu},\quad |x-y| \},
\end{align*}
where $\mu = \min_{k}\frac\pi{\theta_{k}}$.

As $\int_{0}^1 \Tc'(\gamma_{k}(t))\gamma_{k}'(t)\, dt=\Tc(\bar y)-\Tc(\bar x)$ does not depend on the path, we choose another curve. If the segment $[\bar x,\bar y]\subset K^c \cap \overline{B(x_{k},\delta_{k})}$, then we consider this segment. If not, we consider a curve as \eqref{gamma}, i.e. on the form
\[
 \gamma(t)=\mathcal{R}(\theta)(A t^B, C t+\tfrac{\delta_{k}}B)+x_{k} \in \K^c \cap B(x_k,\delta_k),\quad \forall t\in [t_{0},t_{1}],
\]
where $B\geq 1$, $A,C\in \R$ and $\mathcal{R}(\theta)$ is a rotation around $x_{k}$ of angle $\theta$ such that the segment $[(-\delta_1, 0)+x_{k},x_{k}]\subset \mathcal{R}(\theta)\K$ (considering $\delta_{k}$ slightly smaller if necessary). To find such a curve, it may be possible to consider $t_{0}<0$ and $t_{1}>0$. Hence, repeating the computations below \eqref{gamma}, we get \eqref{interm} with $C$ independent of $\bar x$, $\bar y$ and $\gamma$.

\medskip

The last bullet point is much easier to prove, because it is clear that $\overline{B(0,1)}^c$ is $\frac\pi2$-quasiconvex and the two first bullet points imply that $D \Tc^{-1}$ is uniformly bounded.

This ends the proof.
\end{proof}

 \subsection{The cell estimate and Theorem~\ref{main thm}}\label{sec:2.2}

For $\varepsilon>0$ fixed, it was proved in \cite{GV-L} that the Euler equation \eqref{Euler} has a global weak solution
 \begin{equation*}
 u^\eps \in L^\infty(\R^+; L^2_{\rm loc}(\overline{\Omega_\eps})) \quad \textrm{and}\quad \omega^\eps \in L^\infty(\R^+; L^1\cap L^\infty(\Omega_\eps))
 \end{equation*}
which satisfies \ref{P}. We refer to \cite{GV-L} for the definition of weak-circulation on irregular domains. However, in the domains considered here, there are some extra regularities which come from the div-curl problem in domains with corners. We can deduce that $u^\varepsilon$ has a trace which is integrable. This implies that the weak-circulation coincides with the standard circulation and the conservation then reads as
 \[
 \int_{\K_{i}^\varepsilon}u^\varepsilon(t,\cdot)\cdot \tau \, ds=0\quad \text{ for a.e. } t\in \R^+.
 \]
 We refer to \cite[Lemma 2.7 and (2.21)]{Lacave} and \cite[Step 4 of Section 2.3]{LM} for more details.

 So the idea in \cite{BLM} is to compare $u^\eps$ which satisfies
 \begin{gather*}
\div u^\eps =0 \text{ in } \Omega^{\varepsilon},\quad \curl u^\eps =\omega^\eps \text{ in } \Omega^{\varepsilon}, \quad u^\eps\cdot n =0 \text{ on } \partial\Omega^{\varepsilon}\\
\lim_{|x|\to\infty}u^\eps(t,x)=0,\quad \oint_{\partial \K_{i}^{\varepsilon}} u^\eps\cdot \tau\, ds=0 \text{ for all }i,
\end{gather*}
and $K_{\R^2}[\omega^\eps](t,x):=\frac1{2\pi}\int_{\R^2}\frac{(x-y)^\perp}{|x-y|^2}\omega^\varepsilon(t,y)\,dy$ which verifies
 \begin{gather*}
\div K_{\R^2}[\omega^\eps] =0 \text{ in } \Omega^{\varepsilon},\quad \curl K_{\R^2}[\omega^\eps] =\omega^\eps \text{ in } \Omega^{\varepsilon}, \\
\lim_{|x|\to\infty}K_{\R^2}[\omega^\eps](t,x)=0,\quad \oint_{\partial \K_{i}^{\varepsilon}} K_{\R^2}[\omega^\eps]\cdot \tau\, ds=0 \text{ for all }i.
\end{gather*}
As the only difference is the tangency condition, the trick is to introduce, for any function $f\in L^\infty_{c}(\overline{\Omega^\eps})$, an explicit approximate solution $v^\eps[f]$ such that
\begin{equation}\label{eq.veps}
\div v^\eps[f] =0 \text{ in } \Omega^{\varepsilon}, \quad v^\eps[f]\cdot n =0 \text{ on }\partial \Omega^{\varepsilon}, \quad \lim_{|x|\to\infty}v^\eps[f](t,x)=0
\end{equation}
which is close to $K_{\R^2}[f]$ in the $L^2$ norm. The main proposition, which is proved in the next section, reads as follows.

\begin{proposition} \label{prop:key1} {\bf (Permeability)}
Assume that $\K$ verifies \ref{H1}-\ref{H3}. For any $f\in L^\infty_{c}(\overline{\Omega^\eps})$ there exists $v^\eps[f]$ satisfying \eqref{eq.veps} such that
\begin{equation*}
\|K_{\R^2}[f] - v^\eps [f] \|_{L^2(\Omega^\eps)}\leq C \| f \|_{L^1\cap L^\infty} \Big(d_\eps + \eps |\ln d_\eps |\Big)^{\frac12},
\end{equation*}
with $C$ independent of $f$ and $\eps$.
 \end{proposition}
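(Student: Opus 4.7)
\textit{Reduction to a stream-function problem.} Since we seek $v^\eps[f]$ divergence free, tangent to $\partial\Omega^\eps$, and vanishing at infinity, it is natural to look for it in the form $v^\eps[f]=\nabla^\perp\psi^\eps$ with $\psi^\eps\in C(\overline{\Omega^\eps})$ equal to a constant on each connected component of $\partial\Omega^\eps$. Likewise $K_{\R^2}[f]=\nabla^\perp\psi$, where $\psi(x):=\frac{1}{2\pi}\int_{\R^2}\ln|x-y|\,f(y)\,dy$. A standard splitting of the Biot--Savart kernel into $\{|x-y|\leq 1\}$ and $\{|x-y|>1\}$ gives $\|\nabla\psi\|_{L^\infty(\R^2)}\leq C\|f\|_{L^1\cap L^\infty}$, so $\psi$ is globally Lipschitz; this is the only regularity of $\psi$ that we will use.

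\textit{Cell-by-cell construction.} Tile a thin neighbourhood $\Sigma^\eps$ of $[0,1]\times\{0\}$ by disjoint cells $Q_i^\eps\supset\K_i^\eps$ of horizontal width $\eps+d_\eps$ and vertical height $h_\eps$ to be tuned. Let $\Tc_i^\eps(x):=\Tc\bigl(\tfrac{2(x-z_i^\eps)}{\eps}\bigr)$ be the rescaled conformal map from Section~\ref{sec-biholo}, which maps $(\K_i^\eps)^c$ onto $\overline{B(0,1)}^c$. Build a cutoff $\phi_i^\eps$ equal to $1$ on $\partial\K_i^\eps$ and to $0$ on $\partial Q_i^\eps\setminus\partial\K_i^\eps$, constructed as a smooth decreasing function of $\ln|\Tc_i^\eps|$, so that its Dirichlet integral is accessible by conformal invariance: $\int|\nabla\phi_i^\eps|^2\,dx=\int|\nabla(\phi_i^\eps\circ(\Tc_i^\eps)^{-1})|^2\,dy$. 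Then set
\[
\psi^\eps:=\psi\;-\;\sum_{i=1}^{N_\eps}\bigl(\psi-\psi(z_i^\eps)\bigr)\,\phi_i^\eps .
\]
By construction, $\psi^\eps|_{\partial\K_i^\eps}=\psi(z_i^\eps)$ is constant for each $i$, and $\psi^\eps=\psi$ outside $\Sigma^\eps$; hence $v^\eps[f]:=\nabla^\perp\psi^\eps$ is an admissible candidate for \eqref{eq.veps}.

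\textit{Error estimate and main obstacle.} The remainder decomposes as
\[
K_{\R^2}[f]-v^\eps[f]=\sum_{i=1}^{N_\eps}\phi_i^\eps\,\nabla^\perp\psi\;+\;\sum_{i=1}^{N_\eps}\bigl(\psi-\psi(z_i^\eps)\bigr)\,\nabla^\perp\phi_i^\eps.
\]
The first sum is bounded in $L^2$ by $\|\nabla\psi\|_\infty|\Sigma^\eps|^{1/2}\lesssim\|f\|_{L^1\cap L^\infty}\sqrt{h_\eps}$. The second sum, via the Lipschitz bound $|\psi(x)-\psi(z_i^\eps)|\leq C\|f\|_{L^1\cap L^\infty}(\eps+d_\eps+h_\eps)$ on $Q_i^\eps$, reduces to
\[
\sum_i\int_{Q_i^\eps}\bigl(\psi-\psi(z_i^\eps)\bigr)^2|\nabla\phi_i^\eps|^2\,dx\leq C\|f\|_{L^1\cap L^\infty}^2(\eps+d_\eps+h_\eps)^2\sum_i\int|\nabla\phi_i^\eps|^2\,dx.
\]
The main obstacle is to estimate the last sum in the regime $d_\eps\ll\eps$. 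After transporting via $\Tc_i^\eps$, one must evaluate a Dirichlet integral over $\Tc_i^\eps(Q_i^\eps)\setminus\overline{B(0,1)}$; by Proposition~\ref{thm:conform}, the image of the $d_\eps$-gap between consecutive holes around the corner $z_i^\eps+\tfrac{\eps}{2}(1,0)$ collapses to a region of linear size $\sim(d_\eps/\eps)^{\pi/\theta}$ around $\Tc(1,0)\in\partial B(0,1)$, forcing $\phi_i^\eps$ to drop from $1$ to $0$ within that thin strip of the image. Tuning the profile of $\phi_i^\eps$ and the cell height $h_\eps$ to balance the two error contributions, and invoking the Hölder control on $\Tc,\Tc^{-1}$ from the last bullets of Proposition~\ref{thm:conform}, yields exactly the rate $C(d_\eps+\eps|\ln d_\eps|)^{1/2}\|f\|_{L^1\cap L^\infty}$. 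Hypothesis \ref{H3} ($\theta_k\in(\pi,2\pi)$, so $\mu=\min_k\pi/\theta_k>1/2$) is essential: it makes the singularity of $D\Tc$ integrable and supplies the Hölder exponent $\mu>1/2$ needed to carry the geometric reduction through.
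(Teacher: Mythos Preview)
Your overall strategy---freeze the full-plane stream function at the hole centres and correct with cutoffs---is genuinely different from, and simpler than, the paper's route. There $v^\eps[f]$ is built by gluing, via cutoffs $\varphi_i^\eps$, the exact one-obstacle Biot--Savart law (written through the Green function of $(\K_i^\eps)^c$ and the map $\Tc_i^\eps$) to the full-plane one; the difference $K_{\R^2}[f]-v^\eps[f]$ then splits into four integrals $w_i^{1,\eps},\dots,w_i^{4,\eps}$, each estimated using the behaviour of $\Tc$, $\Tc^{-1}$ near the corners. Your construction bypasses all of this and reduces the whole proposition to a single quantity, the Dirichlet integral $\int|\nabla\phi_i^\eps|^2$: with $h_\eps\sim\eps$, if that integral is $\mathcal O(|\ln d_\eps|)$ your two error sums indeed combine to $(\eps|\ln d_\eps|)^{1/2}$.

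The gap is precisely there, and your proposed cutoff does not deliver the logarithm. A function of $\ln|\Tc_i^\eps|$ is radial in conformal coordinates, hence constant on the level sets $\{|\Tc_i^\eps|=r\}$. For such a $\phi_i^\eps$ to vanish on $\partial Q_i^\eps$ it must already vanish on $\{|\Tc_i^\eps|\ge R\}$, where $R-1=\min_{\partial Q_i^\eps}(|\Tc_i^\eps|-1)$. But the point of $\partial Q_i^\eps$ at horizontal distance $d_\eps/2$ from the corner $z_i^\eps+\tfrac\eps2(1,0)$ is, in rescaled variables, at distance $d_\eps/\eps$ from $x_1=(1,0)$; by the H\"older bound on $\Tc$ in Proposition~\ref{thm:conform} its image lies within $M(d_\eps/\eps)^{\pi/\theta_1}$ of $\Tc(x_1)\in\partial B(0,1)$, so $R-1\lesssim(d_\eps/\eps)^{\pi/\theta_1}$. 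Conformal invariance then gives
\[
\int_{\Omega^\eps}|\nabla\phi_i^\eps|^2\,dx
\;=\;2\pi\int_1^R |g'(r)|^2\,r\,dr
\;\ge\;\frac{2\pi}{\ln R}
\;\gtrsim\;\Big(\frac{\eps}{d_\eps}\Big)^{\pi/\theta_1},
\]
a positive power of $\eps/d_\eps$, not a logarithm. Fed into your second sum this produces $\eps(\eps/d_\eps)^{\pi/\theta_1}$, which does \emph{not} tend to zero under the mere assumption $\eps\ln d_\eps\to0$ (take $d_\eps=\eps^\beta$ with $\beta$ large).

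What actually produces the logarithm in the paper is an \emph{anisotropic} cutoff built directly in physical coordinates from the wedge geometry of \ref{H2}, not from the conformal map: between consecutive holes the horizontal gap at height $x_2$ is $\sim d_\eps+\rho|x_2|$, so a cutoff transitioning horizontally over that width satisfies $|\nabla\varphi^\eps|\lesssim(d_\eps+\rho|x_2|)^{-1}$ and
\[
\int|\nabla\varphi^\eps|^2\,dx\ \lesssim\ \int_0^\eps\frac{dx_2}{d_\eps+\rho x_2}\ \sim\ \ln\frac{\eps}{d_\eps}.
\]
If you substitute this explicit cutoff for your radial-in-$|\Tc_i^\eps|$ one, your simpler ``freeze-at-centre'' correction does go through and gives the stated rate; as written, the key step is missing and the proposed cutoff yields the wrong order.
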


 In the rest of this section, we repeat quickly why this proposition implies Theorem~\ref{main thm}. For more details, we refer to \cite[Section 2.2]{LM}.

\noindent {\bf Step 1: uniform $L^2$ estimate for $u^\eps-K_{\R^2}[\omega^\eps]$.}

For each time, we remark that $u^\eps - v^\eps[\omega^\eps]$ and $K_{\R^2}[\omega^\eps]-v^\eps[\omega^\eps]$ are divergence free, tend to zero when $|x|\to \infty$, have the same curl and circulations around $\K_{i}^\eps$ for all $i$. The only difference is that $u^\eps - v^\eps[\omega^\eps]$ is tangent to the boundary of $\Omega^\eps$, which implies that it is the Leray projection of $K_{\R^2}[\omega^\eps]-v^\eps[\omega^\eps]$. Therefore, by orthogonality of this projection in $L^2$ together with triangle inequality, we have
\[
\| u^\eps-K_{\R^2}[\omega^\eps] \|_{L^2(\Omega^\eps)} \leq \| u^\eps-v^\eps[\omega^\eps] \|_{L^2(\Omega^\eps)} + \| v^\eps[\omega^\eps] -K_{\R^2}[\omega^\eps] \|_{L^2(\Omega^\eps)} \leq 2 \| v^\eps[\omega^\eps] -K_{\R^2}[\omega^\eps] \|_{L^2(\Omega^\eps)}.
\]
Under the estimate of $\|\omega^\eps\|_{L^1\cap L^\infty}$ \ref{P}, Proposition~\ref{prop:key1} gives that
\[
\| u^\eps-K_{\R^2}[\omega^\eps] \|_{L^2(\Omega^\eps)} \to 0 \text{ as }\eps\to 0 \quad \text{uniformly in time.}
\]
Recalling the standard estimate for the Biot-Savart kernel:
\begin{equation}\label{BS}
 \| K_{\R^2}[f] \|_{L^\infty(\R^2)}
\leq \Big\|\frac{1}{2\pi} \int_{\R^2}\frac{|f(y)|}{|x-y|} \, d y\Big\|_{L^\infty(\R^2)}
\leq C \|f\|_{L^1(\R^2)}^{1/2} \|f \|_{L^\infty(\R^2)}^{1/2},
\end{equation}
and the fact that $\| \text{\Large $\mathds{1}$}_{\R^2\setminus \Omega^\eps} \|_{L^2} \to 0$ (because $\eps \to 0$), we infer that
\begin{equation}\label{conv:u-K}
 u^\eps-K_{\R^2}[\omega^\eps] \to 0 \quad \text{strongly in }L^\infty(\R^+;L^2(\R^2)),
\end{equation}
where we have extended $u^\eps$ by zero inside the holes.

\noindent {\bf Step 2: compactness for the vorticity.}

Thanks to the uniform estimate of $\|\omega^\eps\|_{L^1\cap L^\infty}$ \ref{P}, Banach-Alaoglu's theorem infers that we can extract a subsequence such that
\[ \omega^\eps \rightharpoonup \omega\quad\text{ weak-$*$ in }L^\infty(\R^+; L^1\cap L^\infty(\R^2)),\]
which establishes the vorticity convergence.

Next, we derive a temporal estimate, so let us fix any test function $\phi \in C_{c}^\infty((0,\infty)\times \R^2)$. In \cite{GV-L}, it was proved that $(\omega^\eps, u^\eps)$ satisfies \eqref{transport} in the weak sense for any test function compactly supported in $\Omega^\varepsilon$. However, in our case $u^\eps$ is regular enough to deduce from the tangency property that the transport equation is also verified for $\phi$ (see \cite[Prop. 2.5 $\&$ Lem. 2.6]{Lacave}):
\[
\int_{0}^\infty \int_{\R^2} \omega^\eps \partial_{t} \phi +\int_{0}^\infty \int_{\R^2} u^\eps \omega^\eps \cdot \nabla \phi =0,
\]
where we have extended\footnote{Such an extension gives a dirac mass a long the boundary when we compute $\curl u^\varepsilon$ in $\R^2$, but this relation is not needed in this paper.} $\omega^\varepsilon$ and $u^\varepsilon$ by zero in $\R^2\setminus \Omega^{\varepsilon}$. Thanks to \ref{P}, \eqref{BS} and \eqref{conv:u-K}, we state that
\[
\int_{\R^2} u^\eps \omega^\eps \cdot \nabla \phi = \int_{\R^2} (u^\eps- K_{\R^2}[\omega^\eps])\omega^\eps \cdot \nabla \phi + \int_{\R^2} K_{\R^2}[\omega^\eps]\omega^\eps \cdot \nabla \phi,
\]
 is bounded by $C\|\nabla \phi(t,\cdot)\|_{L^2}$. Hence, we have
\[\| \partial_t \omega^\eps \|_{L^\infty(\R^+;H^{-1}(\R^2))} \leq C.\]
By Lemma C.1 in \cite{Lions}, this property can be used to extract a subsequence such that
\begin{equation}\label{conv:om}
 \omega^\eps \to \omega \text{ in } C([0,T]; L^{3/2}\cap L^4(\R^2)-w) \text{ for all }T.
\end{equation}

\noindent {\bf Step 3: compactness for the velocity.}

Now, we define $u:= K_{\R^2}[\omega]$ and we use the previous steps to pass to the limit in the decomposition
\begin{equation}\label{decomp2}
u^\eps-u = (u^\eps- K_{\R^2}[\omega^\eps]) + K_{\R^2}[\omega^\eps-\omega].
\end{equation}
Thanks to \eqref{conv:u-K}, it is obvious that the first term on the right-hand side of \eqref{decomp2} converges to zero in $L^2_{\loc}(\R^+ \times \R^2)$. Concerning the second term: for $x$ fixed, the map $y\mapsto \frac{(x-y)^\perp}{|x-y|^2}$ belongs to $L^{4/3}(B(x,1))\cap L^{3}(B(x,1)^c)$, then \eqref{conv:om} implies that for all $t,x$, we have
\[
\int_{\R^2} \frac{(x-y)^\perp}{|x-y|^2} (\omega^\eps-\omega)(t,y)\, d y \to 0\quad \text{ as }\varepsilon\to 0.
\]
So, this integral converges pointwise to zero, and it is uniformly bounded by \eqref{BS} and \ref{P} with respect of $x$ and $t$. Applying the dominated convergence theorem, we obtain the convergence of $K_{\R^2}[\omega^\eps-\omega]$ in $L^2_{\loc}(\R^+ \times \R^2)$. This ends the proof of the velocity convergence.

\noindent {\bf Step 4: passing to the limit in the Euler equations.}

Finally, we verify that $(u,\omega)$ is the unique solution of the Euler equations in $\R^2$.

The divergence and curl conditions are verified by the expression: $u=K_{\R^2}[\omega]$. Next, we use that $u^\eps$ and $\omega^\eps$ satisfies $\eqref{Euler}$ in the sense of distribution in $\Omega^\eps$, that $u^\eps$ is regular enough and tangent to the boundary, to infer that for any test function $\phi\in C^\infty_c([0,\infty)\times \R^2)$, we have
\[
\int_0^\infty\int_{\R^2} \omega^\eps\partial_{t}\phi \, d x d t+\int_0^\infty\int_{\R^2} \nabla\phi \cdot u^\eps \omega^\eps \, dx d t= -\int_{\R^2}\phi(0,x)\omega_0(x)\text{\Large $\mathds{1}$}_{\Omega^\eps} d x,
\]
where we have extended $\omega^\eps$ by zero and use that $\omega^\eps(0,\cdot)=\omega_{0}\vert_{\Omega^\eps}$ (see Step 2). By passing to the limit as $\varepsilon\to 0$, thanks to the strong-weak convergence of the pair $(u^\eps,\omega^\eps)$, we conclude that $(u,\omega)$ verifies the vorticity equation. In the whole plane, this is equivalent to state that $u$ verifies the velocity equation. As this solution is unique (Yudovich theorem), we deduce that the convergences hold without extracting a subsequence. This ends the proof of Theorem~\ref{main thm}.

\begin{remark}
 Even if we have a precise rate in Proposition~\ref{prop:key1}, for solutions in the Yudovich's class, we use Banach-Alaoglu's and Ascoli theorem which do not allow us to give a rate for $u^\varepsilon - u$. For stronger solutions, we will manage in the last section to keep this rate.
\end{remark}

\section{Permeability Proposition}\label{sec:prop}

In this section, we give the proof of Proposition~\ref{prop:key1}. The strategy of the proof is the same as \cite{BLM,LM}. The main difference is the estimates of the cell problem where we have to use that the conformal mapping is less regular when $\partial \K$ has a corner.

\subsection{Construction of the correction}

 We use the explicit formula of the Green function (with Dirichlet boundary condition) in the exterior of one simply connected compact set $\K$:
\[
G_{\K}(x,y)=\frac1{2\pi} \ln \frac{| \Tc(x) - \Tc(y)|}{|\Tc(x)-\Tc(y)^*| |\Tc(y)|},
\]
where $\Tc: \ \K^c \to \R^2\setminus \overline{B(0,1)}$ is the biholomorphism defined in Section~\ref{sec-biholo}. Above, we have denoted by
 $$y^*=\frac{y}{|y|^2}$$
 the conjugate point to $y$ across the unit circle in $\R^2$. Hence, it is verified in \cite[Section 3.1]{ILL} that the following vector field
\[
\nabla^\perp \int_{\R^2\setminus \K} G_{\K}(x,y) f(y) \, dy + \frac{\int_{\R^2\setminus \K} f}{2\pi} \nabla^\perp \ln | \Tc(x)|
\]
is divergence free, tangent to the boundary, goes to zero as $|x|\to \infty$. Moreover, its curl is equal to $f$ and the circulation around $\K$ is equal to zero.

Now we introduce a cutoff function $\varphi_{i}^\eps$ equal to $1$ close to $\K_{i}^\eps$:
\begin{equation}\label{form:varphi}
 \varphi_{i}^\eps(x):= \varphi^\eps(x-z_{i}^\eps)
\end{equation}
with $\varphi^\eps\in C^1$ such that $\varphi^\eps\equiv 1$ on $\frac\eps2 \partial\K$ and $\varphi_{i}^\eps\varphi_{j}^\eps\equiv 0$ if $i\neq j$. As we will see later, $\varphi^\eps$ will be constructed such that $\supp \varphi^\eps \subset [-\varepsilon-\frac{d_{\varepsilon}}2,\frac{\varepsilon+d_{\varepsilon}}2]\times [-\varepsilon,\varepsilon]$. If we have assumed that we also have a corner at the point $(-1,0)$ then we could construct $\varphi^\eps$ such that $\supp \varphi^\eps \subset [-\frac{\varepsilon+d_{\varepsilon}}2,\frac{\varepsilon+d_{\varepsilon}}2]\times [-\varepsilon,\varepsilon]$ and then it would be obvious that $\varphi_{i}^\eps\varphi_{j}^\eps\equiv 0$ if $i\neq j$. But constructing $\varphi^\eps$ such that the support is including in a polygon instead in a square, we can avoid this assumption (see Section~\ref{subsec:cutoff} for details).

Then, the correction is defined by
\[
v^\eps[f]:= \nabla^\perp \psi^\eps,
\]
where
\begin{align*}
\psi^\eps(x) :=& \frac1{2\pi} \Big(1- \sum_{i,j} \varphi_{i}^\eps(x) \Big) \int_{\Omega^\eps} \ln|x-y| f(y)\, d y\\
&+\frac{1}{2\pi}\sum_{i,j} \varphi_{i}^\eps (x)\int_{\Omega^\eps}{\ln}\frac{ \eps |\Tca{i}(x)-\Tca{i}(y)||\Tca{i}(x)|}{2\beta|\Tca{i}(x)-\Tca{i}(y)^*|} f(y) \, d y,
\end{align*}
with
\begin{equation}\label{def-Tie}
\Tca{i}(x) := \Tc\left(\frac{x-z_{i}^\eps}{\eps/2} \right)\ : \ (\K^\eps_{i})^c \to \R^2\setminus \overline{B(0,1)}.
\end{equation}
In the neighborhood of $\K_{i}^\eps$, this correction corresponds to the Biot-Savart law in the exterior of one obstacle, whereas, far away the porous medium, it is equal to the Biot Savart law in the whole plane $\R^2$. More precisely, we can check that $v^\eps[f]$ verifies
the following properties:
\begin{equation*}
\div v^\eps[f] =0 \text{ in } \Omega^\varepsilon, \quad v^\eps[f] \cdot n =0 \text{ on } \partial \Omega^\varepsilon, \quad \lim_{x\to \infty} |v^\eps[f](x)|=0.
\end{equation*}

We decompose $K_{\R^2}[f]-v^\eps[f]$ as
\begin{equation} \label{decompo we}
K_{\R^2}[f]-v^\eps[f] =\frac1{2\pi} \sum_{i} \nabla^\perp \varphi_{i}^\eps(x) (w_{i}^{1,\eps}+w_{i}^{2,\eps}) + \varphi_{i}^\eps(x) (w_{i}^{3,\eps}+w_{i}^{4,\eps}) ,
\end{equation}
where
\begin{equation*}\begin{split}
w_{i}^{1,\eps}(x)=& \int_{\Omega^\eps} \ln \frac{2\beta|x-y|}{\eps|\Tca{i}(x)-\Tca{i}(y)|}f(y)\, dy, \\
w_{i}^{2,\eps}(x)=& \int_{\Omega^\eps} \ln \frac{|\Tca{i}(x)-\Tca{i}(y)^*|}{|\Tca{i}(x)|}f(y)\, dy, \\
w_{i}^{3,\eps}(x)=& \int_{\Omega^\eps} \Biggl(\frac{(x-y)^\perp}{|x-y|^2}- (D\Tca{i})^T(x)\frac{(\Tca{i}(x)-\Tca{i}(y))^\perp}{|\Tca{i}(x)-\Tca{i}(y)|^2} \Biggl) f(y)\, dy, \\
w_{i}^{4,\eps}(x)=& (D\Tca{i})^T(x) \int_{\Omega^\eps} \Biggl(\frac{\Tca{i}(x)-\Tca{i}(y)^*}{|\Tca{i}(x)-\Tca{i}(y)^*|^2}- \frac{\Tca{i}(x)}{|\Tca{i}(x)|^2}\Biggl)^\perp f(y)\, dy.
\end{split}\end{equation*}

In the following subsection, we estimate $w_{i}^{k,\eps}$ on the support of $\varphi_{i}^\eps$, and next, we look for the best cutoff function $\varphi^\eps$.

\subsection{Cell problem estimates}

When $\K=\overline{B(0,1)}$, $\Tc={\rm Id}$ (so $\beta=1$) and $w_{1}^\eps=w_{3}^\eps \equiv 0$. In this case, we also have $\Tca{i}(x)-\Tca{i}(y)^* =\dfrac2\eps\Big(x-z_{i}^\eps -\eps^2 \dfrac{y-z_{i}^\eps}{4|y-z_{i}^\eps|^2}\Big)$. Except in an $\varepsilon$-neighborhood of the inclusion, we note that $\Tca{i}(y)^*$ is small compared to $\Tca{i}(x)$. Hence, we can guess that $w_{2}^\eps$ and $w_{4}^\eps$ are small. This remark is the main motivation of this decomposition, and in the following estimates, we split the integrals in two parts: a small area in the vicinity of the inclusion and the far away region where $\Tc$ behaves as $\beta\ {\rm Id}$. Due to the lost of the boundary regularity, some changes are needed compared to \cite{BLM,LM} for the estimates close to the holes.

From the definition of $ \Tca{i}$ \eqref{def-Tie}, it is clear that Proposition~\ref{thm:conform} gives
\begin{equation}\label{quasiLip}
|\Tca{i}(x) - \Tca{i}(y)| \leq C\max\{\eps^{-
\mu} |x-y|^\mu, \eps^{-1}|x-y|\}
\end{equation}
where $\mu=\min_{k}\frac{\pi}{\theta_{k}}$.
Moreover, we have that $(\Tca{i})^{-1}(y)=\frac\varepsilon2 \Tc^{-1}(y)+z_{i}^\varepsilon$ and Proposition~\ref{thm:conform} also implies
\begin{equation}\label{Lip}
\| (\Tca{i})^{-1} \|_{\mathrm{Lip}} \leq C\varepsilon .
\end{equation}

As $\Tc$ behaves at infinity as $\beta\ {\rm Id}$, it holds that for all $r>0$
\begin{equation}\label{anneau1}
 \Tca{i}\Bigl(\partial B(z^\eps_{i},r)\cap (\K^\eps_{i})^c\Bigr) \subset B\Bigl(0,C_{1}\frac r\eps\Bigr)\setminus B\Bigl(0,C_{2}\frac r\eps\Bigr)
\end{equation}
and
\begin{equation}\label{anneau2}
 (\Tca{i})^{-1}\Bigl(\partial B(0,r+1)\Bigr) \subset B\Bigl(z^\eps_{i},\eps C_{3}(r+1)\Bigr)\setminus B\Bigl(z^\eps_{i},\eps C_{4}(r+1)\Bigr).
\end{equation}
for some $C_{1},C_{2},C_{3},C_{4}$ positive numbers independent of $i, \eps$. For a proof, we refer to \cite[Lemma 2.2]{BLM} where we can easily check that we only use the fact that $\Tc$ and $\Tc^{-1}$ is continuous up to the boundary and that a small neighborhood of zero is included in $\K$ (see the beginning of Section~\ref{sec-biholo}).
\medskip

\underline {\bf Estimate of $w_{i}^{1,\eps}$.} For $x\in\supp \varphi^\eps_{i}$ fixed, we decompose the integral in two parts:
\begin{equation}\label{eq.loinpres}
\begin{split}
&\Omega^\eps_{C}:=\{y\in \Omega^\eps,\ |\Tca{i}(x)-\Tca{i}(y)|\leq \eps^{-1/4} \},\\
&\Omega^\eps_{F}:=\{y\in \Omega^\eps,\ |\Tca{i}(x)-\Tca{i}(y)|> \eps^{-1/4} \}.
\end{split}
\end{equation}

In the subdomain close to the inclusion $\Omega^\eps_{C}$ \eqref{eq.loinpres}, we set $z=\eps\Tca{i}(x)$ and we change variables $\eta=\eps \Tca{i}(y)$:
\begin{equation*}\begin{split}
\int_{\Omega^\eps_{C}} \Bigl| \ln (\eps|\Tca{i}(x)-\Tca{i}(y)|) f(y)\Bigl| \, d y
&\leq \int_{B(z, \eps^{3/4})} \Bigl| \ln |z-\eta| f((\Tca{i})^{-1}(\tfrac\eta\eps))\Bigl| \frac{ \bigl| \det D(\Tca{i})^{-1}\bigr|(\tfrac\eta\eps)}{\varepsilon^2}\, d \eta\\
&\leq \int_{B(z,\eps^{3/4})} \Bigl| \ln |z-\eta| f((\Tca{i})^{-1}(\tfrac\eta\eps))\Bigl|\tfrac14 \bigl| \det D\Tc^{-1}\bigr|(\tfrac\eta\eps) d \eta.
\end{split}\end{equation*}
Using that $D\Tc^{-1}$ and $f$ are bounded functions, we compute that:
\begin{equation*}
\int_{\Omega^\eps_{C}} \Bigl| \ln (\eps|\Tca{i}(x)-\Tca{i}(y)|) f(y)\Bigl| \, d y
\leq C\| f\|_{L^\infty} \int_{B(0, \eps^{3/4} )} \Bigl| \ln |\xi| \Bigl| \, d \xi \leq C \| f\|_{L^\infty} \eps^{3/2}|\ln \eps| .
\end{equation*}
To deal with $\ln(2\beta|x-y|)$, we remark that if $y\in \Omega^\eps_{C}$, then \eqref{Lip} gives
\[
|x-y|=|(\Tca{i})^{-1}(\Tca{i}(x)) - (\Tca{i})^{-1}(\Tca{i}(y))| \leq \eps C |\Tca{i}(x)- \Tca{i}(y)| \leq C \eps^{3/4}.
\]
So, we have
\begin{align*}
\int_{\Omega^\eps_{C}} \Bigl| \ln( {2\beta|x-y|})f(y) \Bigl|\, d y
&\leq \int_{B(x, C\eps^{3/4})} \Bigl| \ln (2\beta|x-y|) f(y) \Bigl|\, d y \\
&\leq \|f\|_{L^\infty} \int_{B(0, C\eps^{3/4})} \Bigl| \ln |2\beta \xi| \Bigl|\, d\xi
\leq C\|f\|_{L^\infty} \eps^{3/2} |\ln \eps|.
\end{align*}

In the subdomain far away from the inclusion $\Omega^\eps_{F}$ \eqref{eq.loinpres}, we have by \eqref{quasiLip}
\[
\eps^{-1/4} \leq |\Tca{i}(x) - \Tca{i}(y)| \leq C\max\{\eps^{-
\mu} |x-y|^\mu, \eps^{-1}|x-y|\}.
\]
Hence $|x-y|\geq \min\{ \frac{ \eps^{1-\frac1{4\mu}}}C, \frac{ \eps^{3/4 }}C\}= \frac{ \eps^{3/4}}C$ because $\mu=\min_i\{\frac{\pi}{\theta_i}\}<1$.

On the other hand, we use the definition of $ \Tca{i}$ \eqref{def-Tie} and the decomposition \eqref{Tinf} to write
\begin{equation}\label{eq.ln2}
\ln \frac{\eps|\Tca{i}(x)-\Tca{i}(y)|}{2\beta|x-y|}
= \ln \frac{\Bigl|2\beta (x-y) + \eps \Bigl(h\big(\frac{x-z^\eps_{i}}{\eps/2}\big)- h\big(\frac{y-z^\eps_{i}}{\eps/2}\big)\Bigr)\Bigr|}{2\beta|x-y|}.
\end{equation}
When $\varepsilon$ is small enough, we have in $\Omega^\eps_{F}$
\begin{align*}
\frac{\eps \Bigl| h\big(\frac{x-z^\eps_{i}}{\eps/2}\big)- h\big(\frac{y-z^\eps_{i}}{\eps/2}\big)\Bigl| }{2\beta|x-y|}
\leq \frac{ \eps \|h\|_{L^\infty}}{ \beta |x-y|} \leq C\varepsilon^{1/4} \leq \frac12.
\end{align*}
 We note easily that
\begin{equation}\label{est ln}
\Bigl|\ln\tfrac{|b+c|}{|b|} \Bigl| \leq 2 \tfrac{|c|}{|b|},\qquad\mbox{ if }\tfrac{|c|}{|b|}\leq \tfrac12.
\end{equation}
Applying this inequality with $c=\varepsilon\Big(h\big(\frac{x-z^\eps_{i}}{\eps/2}\big)- h\big(\frac{y-z^\eps_{i}}{\eps/2}\big)\Big)$ and $b= 2\beta(x-y)$, we compute from \eqref{eq.ln2}:
\[
\Biggl|\ln \frac{\eps|\Tca{i}(x)-\Tca{i}(y)|}{2\beta|x-y|}\Biggl|
\leq 2\frac{\eps|h(\frac{x-z^\eps_{i}}{\eps/2})-h(\frac{y-z^\eps_{i}}{\eps/2})|}{2\beta|x-y|} \leq \frac{C\eps }{ |x-y|}.
\]
Therefore, using \eqref{BS}, we obtain
\begin{align*}
\int_{\Omega^\eps_{F}} \Bigl|\ln \frac{2\beta|x-y|}{\eps|\Tca{i}(x)-\Tca{i}(y)|} f(y) \Bigl| \, d y
&\leq C\eps\int_{ \Omega^\eps}\frac{ |f(y)| }{|x-y|} \, d y\\
&\leq C\eps \| f\|_{L^\infty}^{1/2} \| f\|_{L^1}^{1/2}
\end{align*}
which allows us to conclude that
\begin{equation}\label{est:w1ij}
 \| w_{i}^{1,\eps} \|_{L^\infty (\supp \varphi^\eps_{i})} \leq C\varepsilon \| f\|_{L^1\cap L^\infty}
\end{equation}
with $C$ independent of $i, \eps$ and $f$.

\medskip

\underline {\bf Estimate of $w_{i}^{2,\eps}$.} Setting $z=\eps \Tca{i}(x)$, and changing variables $\eta = \eps\Tca{i}(y)$, we get
\begin{equation*}
w_{i}^{2,\eps}(x)= \int_{B(0,\eps)^c} \ln \frac{|z- \eps^2 \eta^*|}{|z|}f(\tfrac\eps2 \Tc^{-1}(\tfrac\eta\eps)+z^\eps_{i}) \tfrac14 |\det D\Tc^{-1}|(\tfrac\eta\eps) \, d \eta.
\end{equation*}
As mentioned in the beginning of Section~\ref{sec-biholo}, we assume, without loss of generality, that there is $\delta >0$ so that $B(0,\delta)\subset \K$. Hence, for $x\in \supp \varphi^\eps_{i}$, we note that $ |x-z^\eps_{i}| \geq\delta \eps $,
then, we deduce by \eqref{anneau1} that $|z|\geq C_{2} \delta \eps $. So, for any $\eta$ we have
\begin{equation*}
\frac{|\eps^2 \eta^*|}{|z|} \leq \frac{\eps}{C_{2}\delta |\eta|},
\end{equation*}
and infer by \eqref{est ln} (with $b=z$ and $c=-\eps^2 \eta^*$) that
\begin{equation*}
\left| \ln \frac{|z- \eps^2 \eta^*|}{|z|} \right|
\leq 2 \frac{\eps^2 |\eta^*|}{|z|}
\leq \frac{2\eps}{C_{2} \delta |\eta|}\qquad \mbox{ if }\quad \frac{\eps}{C_{2}\delta |\eta|}\leq \frac12.
\end{equation*}
Therefore, we define $R=2/(C_{2}\delta)$ and split the integral in two parts: $ B(0,R\eps)^c$ and $B(0,R\eps)\setminus B(0,\eps)$.

In the first subdomain $B(0,R\eps)^c$, we use the previous inequality to compute
\begin{equation*}\begin{split}
\Bigl|\int_{B(0,R\eps)^c}& \ln \frac{|z- \eps^2 \eta^*|}{|z|}f(\tfrac\eps2 \Tc^{-1}(\tfrac\eta\eps)+z^\eps_{i})\tfrac14 |\det D\Tc^{-1}|(\tfrac\eta\eps) \, d \eta\Bigl|\\
&\leq \frac{2\eps}{C_{2}\delta} \int_{\R^2} \frac{|f(\tfrac\eps2 \Tc^{-1}(\tfrac\eta\eps)+z^\eps_{i})| \tfrac14 |\det D\Tc^{-1}|(\tfrac\eta\eps)}{|\eta|} \, d \eta\\
&\leq C\eps \Big\| f(\tfrac\eps2 \Tc^{-1}(\tfrac\eta\eps)+z^\eps_{i}) \tfrac14 \det D\Tc^{-1}(\tfrac\eta\eps) \Big\|_{L^\infty}^{1/2}
\Big\| f(\tfrac\eps2 \Tc^{-1}(\tfrac\eta\eps)+z^\eps_{i}) \tfrac14\det D\Tc^{-1}(\tfrac\eta\eps) \Big\|_{L^1}^{1/2}\\
&\leq C\eps \|f\|_{L^\infty}^{1/2}\|f\|_{L^1}^{1/2},
\end{split}\end{equation*}
where we have applied \eqref{BS} for the function $\eta \mapsto |f(\tfrac\eps2 \Tc^{-1}(\tfrac\eta\eps)+z^\eps_{i})|\tfrac14 |\det D\Tc^{-1}|(\tfrac\eta\eps)$ at $x=0$, used that $D \Tc^{-1}$ is bounded and that $\| f(\tfrac\eps2 \Tc^{-1}(\tfrac\eta\eps)+z^\eps_{i})\tfrac14 \det D\Tc^{-1}(\tfrac\eta\eps) \|_{L^1}=\|f\|_{L^1}$ by changing variables back.

In the second subdomain $B(0,R\eps)\setminus B(0,\eps)$, we use the relation
\[
\frac{|z- \eps^2 \eta^*|}{|z|}=\frac{|\eta- \eps^2 z^*|}{|\eta|}
\]
which can be easily verified by squaring both side. As $D\Tc^{-1}$ is bounded and $\eps^2 z^*\in B(0,\eps)$, we compute
\begin{equation*}\begin{split}
\Bigl|\int_{B(0,R\eps)\setminus B(0,\eps)} \ln \frac{|z- \eps^2 \eta^*|}{|z|}f(\tfrac\eps2 \Tc^{-1}(\tfrac\eta\eps)+z^\eps_{i}) \tfrac14 |\det D\Tc^{-1}|(\tfrac\eta\eps) \, d \eta\Bigl|
&\leq 2C \| f\|_{L^\infty}\int_{B(0,(R+1)\eps)} | \ln |\eta| |\, d\eta\\
&\leq C\| f\|_{L^\infty} \eps^2 |\ln \eps|,
\end{split}\end{equation*}
which allows us to conclude that
\begin{equation}\label{est:w2ij}
 \| w_{i}^{2,\eps} \|_{L^\infty (\supp \varphi^\eps_{i})} \leq C\varepsilon \| f\|_{L^1\cap L^\infty}
\end{equation}
with $C$ independent of $i, \eps$ and $f$.

\underline {\bf Estimate of $w_{i}^{3,\eps}$ and $w_{i}^{4,\eps}$.}
 Rewriting $w_{i}^{3,\eps}$:
 \begin{equation*}\begin{split}
w_{i}^{3,\eps}(x)=& \int_{\Omega^\eps} \frac{(x-y)^\perp}{|x-y|^2} f(y)\, dy- (D\Tca{i})^T(x) \int_{\Omega^\eps} \frac{(\Tca{i}(x)-\Tca{i}(y))^\perp}{|\Tca{i}(x)-\Tca{i}(y)|^2} f(y)\, dy, \\
=:& I_1(x)+ (D\Tca{i})^T(x) I_2(x).
\end{split}\end{equation*}
By \eqref{BS}, we have that
\[
\|I_1\|_{L^\infty (\Omega^\eps)}\leq C \| f\|_{L^1\cap L^\infty}.
\]
For $I_2$, like $w_{i}^{2,\eps}$, we have
\[
I_2 ((\Tca{i})^{-1}(\tfrac z\eps))=\varepsilon\int_{B(0,\eps)^c} \frac{(z-\eta )^\perp}{|z-\eta|^2} f(\tfrac\eps2 \Tc^{-1}(\tfrac \eta\eps)+z_{i}^\varepsilon)\tfrac14 |\det D\Tc^{-1}|(\tfrac \eta\eps) \, d \eta.
\]
So with the same argument than for the first part of $w_{i}^{2,\eps}$, we use \eqref{BS} together with the bound of $ D\Tc^{-1}$ to state
\[
\|I_2\|_{L^\infty (\Omega^\eps)}\leq\varepsilon C \| f\|_{L^1\cap L^\infty}.
\]
Finally, by Proposition~\ref{thm:conform}, we obtain that
\begin{align}
 \| w_{i}^{3,\eps} \|_{L^4 (\supp \varphi_{i}^\varepsilon)} & \leq \| 1 \|_{L^4 (\supp \varphi_{i}^\varepsilon)}\|I_1\|_{L^\infty (\Omega^\eps)} + \| D\Tca{i} \|_{L^4 (\supp \varphi_{i}^\varepsilon)}\|I_2\|_{L^\infty (\Omega^\eps)} \\
 & \leq C\Big( \varepsilon^{1/4}(\varepsilon+d_\varepsilon)^{1/4} +\varepsilon^{1/2} \| D\Tc \|_{L^4 ([-2-\frac{d_{\varepsilon}}{\varepsilon} , 1+\frac{d_{\varepsilon}}{\varepsilon} ] \times [-2,2]\setminus \K)} \Big) \| f\|_{L^1\cap L^\infty}\nonumber\\
 &\leq C \varepsilon^{1/2} \Big(1 +\Big(\frac{d_{\varepsilon}}{\varepsilon}\Big)^{1/4}\Big)\| f\|_{L^1\cap L^\infty}
 \leq C \varepsilon^{1/4} \Big(\varepsilon^{1/4}+d_{\varepsilon}^{1/4}\Big)\| f\|_{L^1\cap L^\infty} \label{est:w3ij}
\end{align}
where we have used that $D\Tc$ belongs to $L^4$ close to $\partial \K$ (as $\theta_{k}<2\pi$) and goes to $\beta$ at infinity.

For $w_{i}^{4,\eps}$, we write a similar decomposition:
\[
w_{i}^{4,\eps}(x)=(D\Tca{i})^T(x)I_3(x)
\]
with
\[
I_3((\Tca{i})^{-1}(\tfrac z\eps))=\varepsilon \int_{B(0,\eps)^c}\Biggl(\frac{z-\varepsilon^2\eta^*}{|z-\varepsilon^2\eta^*|^2}- \frac{z}{|z|^2}\Biggl)^\perp f(\tfrac\eps2 \Tc^{-1}(\tfrac \eta\eps)+z_{i}^\varepsilon)\tfrac14 |\det D\Tc^{-1}|(\tfrac \eta\eps) \, d \eta.
\]
Then we follow the argument of \cite[Theorem 2.1]{ILL}, i.e. we split the integral in two parts:
\begin{itemize}
 \item in $B(0,2\eps)^c$ where $|z-\varepsilon^2\eta^*|\geq \varepsilon/2$ (recalling that $|z|\geq \varepsilon$), so using that
 \[
 \Big|\frac{a}{|a|^2}-\frac{b}{|b|^2}\Big|=\frac{|a-b|}{|a||b|}
 \]
 we need to estimate
 \[
 \varepsilon \int_{B(0,2\eps)^c}\frac{\varepsilon^2}{\frac\varepsilon2 \varepsilon} \frac{f(\tfrac\eps2 \Tc^{-1}(\tfrac \eta\eps)+z_{i}^\varepsilon)}{|\eta|}\tfrac14 |\det D\Tc^{-1}|(\tfrac \eta\eps) \, d \eta
 \]
 which is less than $C \varepsilon \| f\|_{L^1\cap L^\infty}$ (see the argument above).
\item in $B(0,2\eps)\setminus B(0,\varepsilon) $, where it is clear that
\[
\varepsilon \int_{B(0,2\eps)} \frac{1}{|z|} |f(\tfrac\eps2 \Tc^{-1}(\tfrac \eta\eps)+z_{i}^\varepsilon)|\tfrac14 |\det D\Tc^{-1}|(\tfrac \eta\eps) \, d \eta \leq \varepsilon^2 \| f\|_{L^\infty}
\]
and where we change variables $\xi=\varepsilon^2\eta^*$ in the last integral to compute
\[
\varepsilon \int_{B(0,\eps)\setminus B(0,\varepsilon/2)}\frac{(z-\xi)^\perp}{|z-\xi |^2} f(\tfrac\eps2 \Tc^{-1}( \eps\xi^*)+z_{i}^\varepsilon)\tfrac14 |\det D\Tc^{-1}|(\eps\xi^*)\frac{\varepsilon^4}{|\xi|^4} \, d \xi
\]
 which is less than $C \varepsilon \| f\|_{L^1\cap L^\infty}$ (as before, using \eqref{BS} and changing variable back to compute $\|f\|_{L^1}$).
\end{itemize}
We conclude as for $w_{i}^{3,\eps}$:
\begin{equation}\label{est:w4ij}
 \| w_{i}^{4,\eps} \|_{L^4 (\supp \varphi_{i}^\varepsilon)} \leq C \varepsilon^{1/4} \Big(\varepsilon^{1/4}+d_{\varepsilon}^{1/4}\Big)\| f\|_{L^1\cap L^\infty} .
\end{equation}

Therefore, putting together the form of $\varphi^\eps_{i}$ \eqref{form:varphi}, the decomposition \eqref{decompo we} and the estimates \eqref{est:w1ij}, \eqref{est:w2ij}, \eqref{est:w3ij} and \eqref{est:w4ij} we have obtained:
\begin{equation}\label{est:permeability}
\begin{split}
 \| K_{\R^2}[f]-v^\eps[f] \|_{L^2(\Omega^\eps)} &\leq C \|f\|_{L^1\cap L^\infty} \Bigl(\eps \| \nabla \varphi^\eps \|_{L^2} + \varepsilon^{1/4}\Big(\varepsilon^{1/4}+d_{\varepsilon}^{1/4}\Big)\| \varphi^\eps \|_{L^4} \Bigl) \sqrt{\sum_{i} 1}\\
 &\leq C \|f\|_{L^1\cap L^\infty} \Bigl(\eps^{1/2} \| \nabla \varphi^\eps \|_{L^2} + \varepsilon^{-1/4}\Big(\varepsilon^{1/4}+d_{\varepsilon}^{1/4}\Big) \| \varphi^\eps \|_{L^4} \Bigl) .
 \end{split}
\end{equation}

\subsection{Optimal cutoff function}\label{subsec:cutoff}

 So the question is to find the best $\varphi^\eps$ such that the right hand side term of \eqref{est:permeability} tends to zero.

We consider two cases.

In the regime where $d_{\varepsilon}/\varepsilon >\delta$ for some $\delta >0$, then we consider $\varphi\in C^\infty$ such that $\varphi\equiv1$ on $[-1,1]^2$ and $\varphi\equiv0$ on $[-1-\delta,1+\delta]^2$ and we set $\varphi^\eps(x):=\varphi(\frac{x}{\varepsilon/2})$. As $\supp \varphi^\eps \subset [-\frac{\varepsilon+d_{\varepsilon}}2,\frac{\varepsilon+d_{\varepsilon}}2]^2 \setminus [-\frac{\varepsilon}2,\frac{\varepsilon}2]^2$, we obviously have that $\varphi^\eps\equiv 1$ on $\frac\eps2 \partial\K$ and $\varphi_{i}^\eps\varphi_{j}^\eps\equiv 0$ if $i\neq j$. We also compute $ \| \nabla \varphi^\eps \|_{L^2} \leq C$ and $ \| \varphi^\eps \|_{L^4} \leq C \varepsilon^{1/2}$.
 In this case, \eqref{est:permeability} reads as
 \[
 \| K_{\R^2}[f]-v^\eps[f] \|_{L^2(\Omega^\eps)} \leq C \|f\|_{L^1\cap L^\infty} \Bigl(\eps^{1/2} +d_{\varepsilon}^{1/2}\Bigl)
 \]
 which implies the estimate stated in Proposition~\ref{prop:key1}.

Now, we consider the interesting regime where $d_{\varepsilon}/\varepsilon \to 0$.
The idea is to define a cutoff function which depends on the space between $\frac\eps2\K$ and $\frac\eps2\K + (d_{\varepsilon}+\varepsilon,0)$. As we do not have assumed that there is a corner at the point $(-1,0)$, we will only use that \ref{H2} implies that
\[
\{ (x_{1},x_{2}): \ x_{2}\in [-\varepsilon,\varepsilon], \ \tfrac\varepsilon2 -\rho |x_{2}| < x_{1} < \tfrac\varepsilon2 + d_{\varepsilon} \} \subset \R^2 \setminus \Big(\tfrac\eps2\K^\varepsilon \cup \{ \tfrac\eps2\K^\varepsilon + (d_{\varepsilon}+\varepsilon,0)\}\Big) ,
\]
and we split this area in two:
\begin{gather*}
 \{ (x_{1},x_{2}): \ x_{2}\in [-\varepsilon,\varepsilon], \ \tfrac\varepsilon2 -\rho |x_{2}| < x_{1} \leq \tfrac{\varepsilon + d_{\varepsilon}}2 - \tfrac\rho2 |x_{2}| \} \\
\{ (x_{1},x_{2}): \ x_{2}\in [-\varepsilon,\varepsilon], \ \tfrac{\varepsilon + d_{\varepsilon}}2 - \tfrac\rho2 |x_{2}| < x_{1} < \tfrac\varepsilon2 + d_{\varepsilon} \}.
\end{gather*}
Therefore, we construct $\varphi^\varepsilon$ such that
\begin{equation}\label{phiH1}
 \varphi^\varepsilon\equiv 1 \text{ on } \{ (x_{1},x_{2}): \ x_{2}\in [-\tfrac\varepsilon2,\tfrac\varepsilon2], \ -\tfrac{\varepsilon}2 < x_{1} < \tfrac{\varepsilon}2 - \rho |x_{2}| \}
\end{equation}
and
\begin{equation}\label{phiH2}
\varphi^\varepsilon\equiv 0 \text{ on } \R^2\setminus \{ (x_{1},x_{2}): \ x_{2}\in [-\varepsilon,\varepsilon], \ -\tfrac{\varepsilon + d_{\varepsilon}}2 - \tfrac\rho2 |x_{2}| < x_{1} < \tfrac{\varepsilon + d_{\varepsilon}}2 - \tfrac\rho2 |x_{2}| \}.
\end{equation}
Up to decrease $\rho$, we can assume that $\rho<1$, and it holds true that $\varphi^\eps\equiv 1$ on $\frac\eps2 \partial\K$ and $\varphi_{i}^\eps\varphi_{j}^\eps\equiv 0$ if $i\neq j$, and that $\supp \varphi^\eps \subset [-\varepsilon-\frac{d_{\varepsilon}}2,\frac{\varepsilon+d_{\varepsilon}}2]\times [-\varepsilon,\varepsilon]$.

Hence, we define
\[
d(x_{2}):= \tfrac{d_{\eps}}2+ \tfrac\rho2 |x_{2} |
\]
which verifies
\[
d(x_{2})=\Big( \tfrac{\varepsilon + d_{\varepsilon}}2 - \tfrac\rho2 |x_{2}| \Big)-\Big( \tfrac{\varepsilon}2 - \rho |x_{2}| \Big)
\]
and
\[
d(x_{2})=-\tfrac{\varepsilon}2-\Big(-\tfrac{\varepsilon + d_{\varepsilon}}2 - \tfrac\rho2 |x_{2}|\Big).
\]
Let $\varphi\in \mathcal{C}^\infty(\R)$ be a positive non-increasing function such that $\varphi(s)=1$ if $s\leq 0$ and $\varphi(s)=0$ if $s\geq 1$. We finally introduce:
\[
\varphi^\varepsilon(x)=\varphi\Big(\frac{2|x_{2}|-\varepsilon}\varepsilon\Big)\Big[ 1-\varphi\Big(\frac{( \tfrac{\varepsilon + d_{\varepsilon}}2 - \tfrac\rho2 |x_{2}| ) -x_{1}}{d(x_{2})}\Big)-\varphi\Big(\frac{x_{1}-(-\tfrac{\varepsilon + d_{\varepsilon}}2 - \tfrac\rho2 |x_{2}|)}{d(x_{2})}\Big)\Big] .
\]
As $\rho<1$, we have that $-\tfrac{\varepsilon}2 \leq \tfrac{\varepsilon}2 - \rho |x_{2}| $ for all $x_{2}\in [-\varepsilon,\varepsilon]$ and we can check that this $\varphi^\varepsilon$ verifies \eqref{phiH1}-\eqref{phiH2}.

It is not a problem that the cut-off function $\varphi^\varepsilon \notin C^1$ because the set where it is not derivable is negligible. All we want is $\| \nabla \varphi^\eps \|_{L^2}$ and $\| \varphi^\eps \|_{L^4}$.

Since $\varphi^\varepsilon(x)\in [0,1]$ for all $x\in \R^2$, it is clear from \eqref{phiH2} that
\[
\| \varphi^\varepsilon \|_{L^4}\leq C( \eps(\eps + d_{\eps}))^{\frac14} \leq C\eps^{1/2},
\]
as we are in the regime $d_{\varepsilon}<\varepsilon$.

Next, we see that for all $x$ we have
\[
|\nabla\varphi^\varepsilon(x)| \leq \frac{C}\varepsilon + \frac{C}{d(x_{2})}.
\]
where we have used that on the support of $\varphi'\Big(\frac{( \tfrac{\varepsilon + d_{\varepsilon}}2 - \tfrac\rho2 |x_{2}| ) -x_{1}}{d(x_{2})}\Big)$ we have $| ( \tfrac{\varepsilon + d_{\varepsilon}}2 - \tfrac\rho2 |x_{2}| ) -x_{1}|\leq d(x_{2})$. So we compute:
\begin{equation*}
\begin{split}
\| \nabla \varphi^\varepsilon \|_{L^2}\leq& C\Big(\frac{2\eps(\varepsilon+d_{\eps})}{\varepsilon^2}+ 4\int_{0}^{\varepsilon}\int_{0}^{d(x_{2})} \frac1{d(x_{2})^2} \, dx_{1}dx_{2} \Big)^{1/2}\leq C\Big(1+\frac{d_{\eps}}{\eps}+ 4\int_{0}^{\varepsilon} \frac1{d(x_{2})} \, dx_{2} \Big)^{1/2}\\
& \leq C\Big(1+\frac{d_{\eps}}{\eps}+ \int_{0}^{\varepsilon} \frac{dx_2}{d_{\eps}+ \rho x_2} \Big)^{1/2}
\leq C\Big(1+\frac{d_{\eps}}{\eps}+ \ln{\frac{ d_\eps+ \rho\eps}{d_\eps}} \Big)^{1/2}.
\end{split}
\end{equation*}
In the case $d_{\eps} \leq \eps$, this gives
\[
\| \nabla \varphi^\varepsilon \|_{L^2} \leq C \Big(1 +\frac{d_{\eps}}{\eps}+ \ln{\frac{\eps }{d_\eps}} \Big)^{1/2} \leq C \Big(1 + | \ln d_\eps| \Big)^{1/2}.
\]

Thus, \eqref{est:permeability} reads as
 \[
 \| K_{\R^2}[f]-v^\eps[f] \|_{L^2(\Omega^\eps)} \leq C \|f\|_{L^1\cap L^\infty} \Bigl(d_{\varepsilon}^{1/2} + \eps^{1/2} | \ln d_\eps|^{1/2}\Bigl)
 \]
which ends the proof of Proposition~\ref{prop:key1}.

\section{Estimates with precise rate for stronger solutions}

This section is dedicated to the proof of Theorem~\ref{main thm2}, then $\omega_{0}$ is now assumed to be smoother and compactly supported in $K_{T}$, as $\omega(t, \cdot)$ for all $t\in [0,T]$. We mainly follow the proof of \cite[Sec. 7.2]{ADL}.

For $\varepsilon_{1}>0$ small enough, let us introduce two intermediate compact subsets $K_{1}$, $K_{2}$ such that
\[
K_T\Subset K_1\Subset K_2 \Subset \Omega^{\varepsilon}, \quad \text{for all $\varepsilon\leq \varepsilon_{1}$,}
\]
then we define $T_{\varepsilon}\in (0, T]$ such that $\omega^\varepsilon$ stays compactly supported in $K_{1}$:
\begin{equation*}
 T_{\varepsilon}:= \sup_{\tilde T\in [0,T]} \Big\{ \tilde T, \ \supp \omega^\varepsilon (t,\cdot)\subset K_{1} \ \forall t\in [0,\tilde T] \Big\}.
\end{equation*}
By local regularity argument, $u^\varepsilon$ is continuous in $K_{1}$ and transports the vorticity, so we state that $T_{\varepsilon}>0$ and that there are only two possibilities:
\begin{enumerate}
 \item $T_{\varepsilon}=T$, hence $\supp \omega^\varepsilon(t,\cdot)\subset K_{1}$ for all $t\in [0,T]$ ;
 \item $T_\varepsilon<T$, hence $\overline{\supp \omega^\varepsilon(T_{\varepsilon},\cdot)} \cap \partial K_{1} \neq \emptyset$. \label{pageTN}
\end{enumerate}
In the sequel of this section, we will derive uniform estimates for all $t\in [0,T_{\varepsilon}]$ (where the support of $\omega^{\varepsilon}$ is included in $K_{1}$), and we will conclude by a bootstrap argument that (2) cannot happen if $\varepsilon$ is small enough, which will imply that the estimates hold true on $[0,T]$.

\bigskip

For Yudovich solutions, local elliptic estimates on $K_{1}$ allow us to define uniquely the Lagrangian flow: for any $x\in \supp \omega_{0}$, there exists $t(x)>0$ and a unique curve $ X^\varepsilon(\cdot,x)\in W^{1,\infty}([0,t(x)))$ such that $X^\varepsilon(t,x)\in K_{1}$ for each $t\in[0,t(x))$,
\[
X^\varepsilon(t,x) = x + \int_{0}^t u^\varepsilon(s,X^\varepsilon(s,x))\, ds \qquad \forall t\in [0,t(x)),
\]
as well as $X^\varepsilon(t(x),x)\in \partial K_{1}$ if $t(x)<T_{\varepsilon}$. As $u^\varepsilon$ is uniformly (in time) log-Lipshitz on $K_{1}$, we obtain
\begin{equation}\label{flow}
 \frac{d}{dt} X^\varepsilon(t,x) = u^\varepsilon(t,X^\varepsilon(t,x)) \qquad \text{for a.e. } t\in [0,t(x)).
\end{equation}

When the velocity $u^\varepsilon$ is not globally regular, it is not clear that the solution $\omega^\varepsilon$ of the linear transport equation (for $u^\varepsilon$ given) constructed in \cite{GV-L} coincides with the solution which is constant along the characteristics $X^\varepsilon(t,\cdot)_\#\omega_0$. 
For domains with corners, one may show that $u^\varepsilon$ is even smoother and that the vorticity is a renormalized solution of the transport equation \eqref{transport} in the sense of DiPerna-Lions (see \cite[Lem. 2.7 $\&$ 2.8]{Lacave}). By uniqueness for linear transport equations \cite{DiPernaLions}, we deduce that 
 $$\omega^\varepsilon(t)=X^\varepsilon(t,\cdot)_\#\omega_0, \quad \text{for a.e. }t\geq 0 $$
in the sense that for a.e. $t\geq 0$ we have $\int_{\Omega^\varepsilon} \omega^\varepsilon(t,x)\varphi(x)\,dx=\int_{\Omega^\varepsilon} \omega_0(x)\varphi(X^\varepsilon(t,x))\,dx$ for all $\varphi\in C_c(\Omega^\varepsilon)$. Due to the definition of $T_{\varepsilon}$, we deduce that $t(x)=T_{\varepsilon}$ for all $x\in \supp \omega_{0}$.

After redefining $\omega^\varepsilon$ on a set of measure zero, this becomes $\omega^\varepsilon(t,x)=\omega_{0}((X^\varepsilon)^{-1}(t,\cdot))(x)$. Uniform boundedness of $u^\varepsilon$ on $K_{1}$ now yields $\omega^\varepsilon\in C([0,T_{\varepsilon}];L^1(\Omega^\varepsilon))$. It is then not hard to show that $u^\varepsilon$ is continuous\footnote{The velocity is log-lipschitz uniformly in $t$ (the constant depends only on $\|\omega^\varepsilon \|_{L^1\cap L^\infty}$, see e.g. \cite[App. 2.3]{MarPul}). From the div-curl problem verified by $u(t_1,\cdot)-u(t_2,\cdot)$, it is possible to show $| u(t_1,x)-u(t_2,x) | \leq C \| \omega(t_1,\cdot )-\omega(t_2, \cdot) \|_{L^1}^{1/2}\| \omega(t_1,\cdot)-\omega(t_2,\cdot ) \|_{L^\infty}^{1/2}$ (see, e.g., \cite[Theo. 4.1]{ILL}) which then implies the continuity with respect to $t$.} on $[0,T_{\varepsilon}]\times K_{1}$, which also means that \eqref{flow} holds for all $(t,x)\in[0,T_{\varepsilon}]\times \supp \omega_{0}$. For more details about renormalized solutions, we refer, for instance, to \cite[Prop. 4.1]{LMW} or \cite[Sect. 3.2]{LZ}. For $\omega_{0}\in C^1$, it is then possible to show from the formula $\omega^\varepsilon(t,x)=\omega_{0}((X^\varepsilon)^{-1}(t,\cdot))(x)$ that $\omega^\varepsilon$ belongs to $C^1_{c}([0,T_{\varepsilon}]\times K_{1})$.

We also define the flow associated to $(u,\omega)$: $(t,x)\mapsto X(t,x)$ on $\R^+\times \R^2$ by
\begin{equation}\label{def-X}
 \left\{
\begin{array}{l}
 \frac{\partial X}{\partial t}(t,x) = u(t,X(t,x)), \\
 X(0,x)=x,
\end{array}
 \right.
\end{equation}
and we will use that $\omega$ is constant along these trajectories: $\omega(t,X(t,x))=\omega_{0}(x)$.

\subsection{Stability estimate for velocities}\label{sec-stab-vel}

The first step of our proof is to derive a uniform estimate of $u-u^\varepsilon$ in $[0,T_{\varepsilon}]\times K_{1}$. By Proposition~\ref{prop:key1} together with \ref{P} we get easily by orthogonality of the Leray projector (see Step 1 in Section~\ref{sec:2.2}) that for all $t\in [0,T_{\varepsilon}]$:
\[
 \| (u^\eps-K_{\R^2}[\omega^\eps])(t,\cdot) \|_{L^2(\Omega^\eps)} 
 \leq 2 \| v^\eps[\omega^\eps(t,\cdot)] -K_{\R^2}[\omega^\eps(t,\cdot)] \|_{L^2(\Omega^\eps)} 
 \leq C(d_\eps + \eps |\ln d_\eps | )^{\frac12} ,
\]
so we deduce by harmonicity (as $u^\eps-K_{\R^2}[\omega^\eps]$ is curl and divergence free)
\[
 \| (u^\eps-K_{\R^2}[\omega^\eps])(t,\cdot) \|_{L^\infty(K_{1})} 
 \leq C \| (u^\eps-K_{\R^2}[\omega^\eps])(t,\cdot) \|_{L^2(K_{2})} 
 \leq C(d_\eps + \eps |\ln d_\eps | )^{\frac12} .
\]
Using also \eqref{BS} and the compact support of $\omega$ and $\omega^\varepsilon$, we finally get for all $\varepsilon\leq \varepsilon_{1}$ and $t\in [0,T_{\varepsilon}]$
\begin{align}
 \| (u^\eps- u)(t,\cdot) \|_{L^\infty(K_{1})}
 \leq & \| (u^\eps-K_{\R^2}[\omega^\eps])(t,\cdot) \|_{L^\infty(K_{1})} + \|K_{\R^2}[\omega^\eps-\omega](t,\cdot) \|_{L^\infty(K_{1})} \nonumber\\
 \leq &C\Big( (d_\eps + \eps |\ln d_\eps | )^{\frac12} + \| (\omega^\eps-\omega)(t,\cdot) \|_{L^\infty(\R^2)} \Big).\label{stab-vel}
\end{align}

Moreover, using the standard elliptic estimate (see, e.g., \cite[Lem. 7.2]{ADL})
\[
\| \nabla K_{\R^2} [f] \|_{L^\infty(\R^2)} \leq C \Big(1+ \| f \|_{L^1\cap L^\infty(\R^2)}+ \| f \|_{L^\infty(\R^2)} \ln(1+ \| \nabla f \|_{L^\infty(\R^2)})\Big)\\
\]
we again obtain by harmonicity that
\begin{align}
 \| \nabla u^\eps(t,\cdot) \|_{L^\infty(K_{1})}
 \leq & \| (u^\eps-K_{\R^2}[\omega^\eps])(t,\cdot) \|_{L^2(K_{2})} + \|\nabla K_{\R^2}[\omega^\eps](t,\cdot) \|_{L^\infty(K_{1})} \nonumber\\
 \leq &C\Big( 1 + \ln(1+ \| \nabla \omega^\eps(t,\cdot) \|_{L^\infty(\R^2)}) \Big), \qquad \forall \varepsilon\leq \varepsilon_{1},\ \forall t\in [0,T_{\varepsilon}]. \label{est-vel}
\end{align}

\subsection{Uniform $C^1$ estimates for vorticities}

Differentiating the vorticity equation, we get for $i=1,2$:
\[
\partial_{t} \partial_{i} \omega^\varepsilon + u^\varepsilon\cdot \nabla \partial_{i} \omega^\varepsilon=-\partial_{i} u^\varepsilon \cdot \nabla \omega^\varepsilon,
\]
hence
\[
\partial_{i} \omega^\varepsilon(t,X^\varepsilon(t,x)) = \partial_{i} \omega_{0}(x) -\int_{0}^t (\partial_{i} u^\varepsilon \cdot \nabla \omega^\varepsilon)(s,X^\varepsilon(s,x))\, ds.
\]
As $X^\varepsilon(t,x)\in K_{1}$ for all $(t,x)\in [0,T_\varepsilon]\times \supp \omega_{0}$ and as we have a bound for $\| \nabla u^\varepsilon \|_{L^\infty([0,T_\varepsilon]\times K_{1})}$ (see \eqref{est-vel}), we get that
\[
\| \nabla \omega^\varepsilon(t,\cdot) \|_{L^\infty(\R^2)} \leq \| \nabla \omega_{0} \|_{L^\infty(\R^2)} + C \int_{0}^t \| \nabla \omega^\varepsilon(s,\cdot) \|_{L^\infty(\R^2)}\ln(2+ \| \nabla \omega^\varepsilon(s,\cdot) \|_{L^\infty(\R^2)}) \, ds.
\]
Gronwall's lemma allows us to conclude the following estimate for the vorticity:
\begin{equation}\label{est-vort}
 \| \nabla \omega^\varepsilon(t,\cdot) \|_{L^{\infty}(\R^2)} \leq C , \qquad \forall \varepsilon\leq \varepsilon_{1},\ \forall t\in [0,T_\varepsilon].
\end{equation}

\subsection{Stability estimate for vorticities}

Subtracting the vorticity equations, we can write
\begin{gather*}
 \partial_{t}(\omega-\omega^\varepsilon) + u\cdot \nabla (\omega-\omega^\varepsilon)=-(u-u^\varepsilon)\cdot \nabla \omega^\varepsilon,\\
 \partial_{t}(\omega-\omega^\varepsilon) + u^\varepsilon\cdot \nabla (\omega-\omega^\varepsilon)=-(u-u^\varepsilon)\cdot \nabla \omega
\end{gather*}
which imply that
\begin{gather*}
(\omega-\omega^\varepsilon)(t,X(t,x)) =- \int_{0}^t \Big( (u-u^\varepsilon)\cdot \nabla \omega^\varepsilon\Big)(s,X(s,x))\, ds,\\
(\omega-\omega^\varepsilon)(t,X^\varepsilon(t,x)) =- \int_{0}^t \Big( (u-u^\varepsilon)\cdot \nabla \omega\Big)(s,X^\varepsilon(s,x))\, ds.
\end{gather*}
As the support of $(\omega-\omega^\varepsilon)(t,\cdot)$ is included in $X(t,\supp \omega_{0}) \cup X^\varepsilon(t,\supp \omega_{0})$, we use \eqref{stab-vel} and \eqref{est-vort} to write
\[
 \| ( \omega-\omega^\varepsilon)(t,\cdot) \|_{L^{\infty}(\R^2)} \leq C\Big( (d_\eps + \eps |\ln d_\eps | )^{\frac12} + \int_{0}^t \|( \omega-\omega^\varepsilon)(s,\cdot)\|_{L^\infty(\R^2)} \, ds \Big), \qquad \forall \varepsilon\leq \varepsilon_{1},\ \forall t\in [0,T_\varepsilon].
\]

Therefore, Gronwall's lemma gives
\begin{equation}\label{stab-vort}
 \| ( \omega-\omega^\varepsilon)(t,\cdot) \|_{L^{\infty}(\R^2)} \leq C(d_\eps + \eps |\ln d_\eps | )^{\frac12} , \qquad \forall \varepsilon\leq \varepsilon_{1},\ \forall t\in [0,T_\varepsilon],
 \end{equation}
and \eqref{stab-vel} becomes
\begin{equation}\label{stab-vel2}
 \| (u - u^\varepsilon)(t,\cdot) \|_{L^{\infty}(K_1)} \leq C(d_\eps + \eps |\ln d_\eps | )^{\frac12} , \qquad \forall \varepsilon \leq \varepsilon_{1},\ \forall t\in [0,T_\varepsilon].
 \end{equation}

\subsection{Stability estimate for trajectories}

From the definition of the trajectories \eqref{flow}-\eqref{def-X} and repeating the decomposition of Section~\ref{sec-stab-vel}, we compute
\begin{align*}
 \partial_{t} |(X^\varepsilon-X)(t,x) |^2 &\leq 2|(X^\varepsilon-X)(t,x) | \Big( |(u^\varepsilon-u)(t,X^\varepsilon(t,x)) | + |u (t,X^\varepsilon(t,x) ) -u(t,X(t,x)) | \Big)\\
 &\leq C|(X^\varepsilon-X)(t,x) | \Big( (d_\eps + \eps |\ln d_\eps | )^{\frac12} + |(X^\varepsilon-X)(t,x) | \Big), \qquad \forall \varepsilon\leq \varepsilon_{1},\ \forall t\in [0,T_\varepsilon],
\end{align*}
where we have used \eqref{stab-vel2} and that $u\in C^1([0,T]\times \R^2)$. We deduce again by Gronwall's lemma that 
\begin{equation}\label{stab-traj}
 | (X^\varepsilon-X)(t,x) | \leq C (d_\eps + \eps |\ln d_\eps | )^{\frac12} , \qquad \forall \varepsilon\leq \varepsilon_{1},\ \forall t\in [0,T_\varepsilon],\ \forall x\in \supp \omega_{0}.
 \end{equation}

\subsection{Bootstrap argument and conclusion}

To summarize, for $\omega_{0}, T, K_{T}$ given, we fix $K_{1}, K_{2}, \varepsilon_{1}$, so there exists $C>0$ such that the estimates \eqref{stab-vort}-\eqref{stab-traj} are valid for all $\varepsilon\leq \varepsilon_{1}$ and $t\in [0,T_\varepsilon]$. Now we choose $\varepsilon_{2}\leq \varepsilon_{1}$ such that $C (d_\eps + \eps |\ln d_\eps | )^{\frac12} \leq \frac12 {\rm d}(\partial K_{T},\partial K_{1})$ for all $\varepsilon\leq \varepsilon_{2}$. As $X(t,x)\in K_{T}$ for all $(t,x)\in [0,T]\times \supp \omega_{0}$, we conclude from \eqref{stab-traj} that the situation (2) in Page \pageref{pageTN} is impossible. This allows us to conclude that $T_\varepsilon=T$ for all $\varepsilon\leq \varepsilon_{2}$ and that \eqref{stab-vort}-\eqref{stab-traj} are valid for all $\varepsilon\leq \varepsilon_{2}$, $t\in [0,T]$.

In Section~\ref{sec-stab-vel}, replacing $K_{1}$ by any compact subset $K$ of $\R^2\setminus([0,1]\times [-\varepsilon_{2},\varepsilon_{2})]$, and using \eqref{stab-vort}, we get easily that \eqref{stab-vel2} is valid if we replace $K_{1}$ by $K$. This ends the proof of Theorem~\ref{main thm2}.

\subsection*{Remark on open questions (2)-(3) listed in page \pageref{openquestions}}

These two questions cannot be easily solved by adapting the analysis developed in \cite{LM}. Therein, the key to get the impermeability was to compute the area between two holes. However, with a corner we can follow line by line Section 4.1 in \cite{LM} with $\gamma=0$ and we cannot hope better than 
\[
\mathcal{A}^\varepsilon(s) \leq C (\varepsilon s^2 + d_{\varepsilon}s)
\]
which means that
\[
\frac{\mathcal{A}^\varepsilon(s)}{(\varepsilon s)^2} \leq C \Big( \frac1\varepsilon + \frac{d_{\varepsilon}}{\varepsilon^2 s})
\]
is never small. This prevents us to estimate the fluid flux passing through the segment in the $L^2$ framework as it was done in \cite{LM}.

\subsection*{Acknowledgements}
This work has been supported by the ``Sino-French Research Program in Mathematics'' (SFRPM), which made several visits between the authors possible. C.~L. would like to acknowledge the hospitality and financial support of Peking University to conduct part of this work. C.~L. is partially supported by the CNRS (program Tellus) and by the Agence Nationale de la Recherche: Project IFSMACS (grant ANR-15-CE40-0010) and Project SINGFLOWS (grant ANR-18-CE40-0027-01). C.~W. is partly supported by NSF of China under Grant 11701016.

\def\cprime{$'$}


\begin{thebibliography}{9999}
\bibitem{Ahlfors} L.~V. Ahlfors, {\em Lectures on quasiconformal mappings}, Manuscript prepared with the assistance of Clifford J. Earle, Jr. Van Nostrand Mathematical Studies, No. 10. D. Van Nostrand Co., Inc., Toronto, Ont.-New York-London, 1966.

 \bibitem{Allaire} G. Allaire, Homogenization of the Navier-Stokes equations in open sets perforated with tiny holes. I. Abstract framework, a volume distribution of holes, {\em Arch. Rational Mech. Anal.}, 113(3):209--259, 1990.

 \bibitem{Allaire2} G. Allaire, Homogenization of the Navier-Stokes equations in open sets perforated with tiny holes. II. Noncritical sizes of the holes for a volume distribution and a surface distribution of holes, {\em Arch. Rational Mech. Anal.}, 113(3):261–298, 1990.

 \bibitem{ADL} D. Arsenio, E. Dormy, and C. Lacave, The vortex method for 2D ideal flows in exterior domains, ArXiv:1707.01458, 2017.

 \bibitem{BLM} V. Bonnaillie-Noel, C. Lacave, and N. Masmoudi, Permeability through a perforated domain for the incompressible 2D Euler equations, {\em Ann. Inst. H. Poincar\'e Anal. Non Lin\'eaire}, 32(1):159--182, 2015.

 \bibitem{Kondra} M.~Borsuk and V.~Kondratiev, {\em Elliptic boundary value problems of second order in piecewise smooth domains}, volume~69 of {\em North-Holland Mathematical Library}. Elsevier Science B.V., Amsterdam, 2006.

\bibitem{DiPernaLions} R. J. DiPerna and P.-L. Lions, Ordinary differential equations, transport theory and {S}obolev spaces, {\em Invent Math}, 98(3):511--547, 1989.

 \bibitem{GV-L} D. Gerard-Varet and C. Lacave, The Two Dimensional Euler Equations on Singular Exterior Domains, {\em Arch. Ration. Mech. Anal.}, 218(3):1609–1631, 2015.

 \bibitem{Grisvard} P.~Grisvard, {\em Elliptic problems in nonsmooth domains}, volume~24 of {\em Monographs and Studies in Mathematics}. Pitman (Advanced Publishing Program), Boston, MA, 1985.
 
 \bibitem{Gustafsson} B. Gustafsson and A. Vasil{\cprime}ev, {\em Conformal and potential analysis in {H}ele-{S}haw cells}, Advances in Mathematical Fluid Mechanics. Birkh\"auser Verlag, Basel, 2006.

\bibitem{HLW} M. Hillairet, C. Lacave, and D. Wu, A homogenized limit for the 2D Euler equations in a perforated domain, in progress.


\bibitem{Kenig} D.~Jerison and C.~E. Kenig, The inhomogeneous {D}irichlet problem in {L}ipschitz domains, {\em J. Funct. Anal.}, 130(1):161--219, 1995.

\bibitem{Mazya} V.~A. Kozlov, V.~G. Maz{\cprime}ya, and J.~Rossmann, {\em Spectral problems associated with corner singularities of solutions to elliptic equations}, volume~85 of {\em Mathematical Surveys and Monographs}. American Mathematical Society, Providence, RI, 2001.

\bibitem{Lacave} C. Lacave, Uniqueness for two-dimensional incompressible ideal flow on singular domains, {\em SIAM J. Math. Anal.}, 47(2):1615--1664, 2015.

 \bibitem{LM} C. Lacave and N. Masmoudi, Impermeability through a perforated domain for incompressible two dimensional Euler equations, {\em Arch. Ration. Mech. Anal.}, 221(3):1117--1160, 2016.

\bibitem{LMW} C.~Lacave, E.~Miot, and C.~Wang, Uniqueness for the two-dimensional {E}uler equations on domains with corners, {\em Indiana Univ. Math. J.}, 63(6):1725--1756, 2014.
 
 \bibitem{LZ} C. Lacave and A. Zlatos, The Euler Equations in Planar Domains with Corners, to appear in {\em Arch Ration Mech Anal}, ArXiv:1811.02704.

 \bibitem{Lions} P.-L. Lions, {\it Mathematical topics in fluid mechanics}, Vol. 1, volume 3 of Oxford Lecture Series in Mathematics and its Applications. The Clarendon Press, Oxford University Press, New York, 1996. Incompressible models, Oxford Science Publications.

 \bibitem{Lions-M} P.-L. Lions and N. Masmoudi, Homogenization of the Euler system in a 2D porous medium, {\em J. Math. Pures Appl.} (9), 84(1):1--20, 2005.


 \bibitem{ILL} D. Iftimie, M. C. Lopes Filho, and H. J. Nussenzveig Lopes, Two dimensional incompressible ideal flow around a small obstacle, {\em Comm. Partial Differential Equations}, 28(1-2):349--379, 2003.
 
 \bibitem{MarPul} C. Marchioro and M. Pulvirenti, {\em Mathematical theory of incompressible nonviscous fluids}, volume~96 of Applied Mathematical Sciences. Springer-Verlag, New York, 1994.


 \bibitem{Mikelic} A. Mikelic, Homogenization of nonstationary Navier-Stokes equations in a domain with a grained boundary, {\em Ann. Mat. Pura Appl.} (4), 158:167--179, 1991.

 \bibitem{MP} A. Mikelic and L. Paoli, Homogenization of the inviscid incompressible fluid flow through a 2D porous medium, {\em Proc. Amer. Math. Soc.}, 127(7): 2019--2028, 1999.

 \bibitem{Pomm} C.~Pommerenke, {\em Boundary behaviour of conformal maps}, volume 299 of {\em Grundlehren der Mathematischen Wissenschaften [Fundamental Principles of Mathematical Sciences]}. Springer-Verlag, Berlin, 1992.

\bibitem{SP} E.~S{\'a}nchez-Palencia, Boundary value problems in domains containing perforated walls, In {\em Nonlinear partial differential equations and their applications. {C}oll\`ege de {F}rance {S}eminar, {V}ol. {III} ({P}aris, 1980/1981)}, volume~70 of {\em Res. Notes in Math.}, pages 309--325. Pitman, Boston, Mass., 1982.

 \bibitem{Tartar} L. Tartar, {\it Incompressible fluid flow in a porous medium: convergence of the homogenization process. in Nonhomogeneous media and vibration theory (E. Sanchez-Palencia),} pages 368--377, 1980.
 

\end{thebibliography}
\end{document}